\documentclass[12pt]{article}
\usepackage[utf8]{inputenc}
\usepackage{amssymb, amsmath,amsthm, thmtools, mathabx, mathtools}
\usepackage{bbm}
\usepackage{amsfonts}
\usepackage[left=3cm,right=3cm,top=2.5cm,bottom=2.5cm]{geometry}
\usepackage{xcolor}

\usepackage{tabto}
\TabPositions{2 cm, 4 cm, 6 cm, 8 cm}

\usepackage{tikz-cd}

\usepackage{tikz}
\usetikzlibrary{arrows, automata, positioning}

\usepackage{tocloft}
\usepackage{appendix}

\usepackage{enumerate}

\usepackage{hyperref}
\usepackage[capitalize]{cleveref}

\hypersetup{
  colorlinks   = true, 
  urlcolor     = blue, 
  linkcolor    = blue, 
  citecolor   = blue 
}

\newtheorem{theorem}{Theorem}[section]
\newtheorem{corollary}[theorem]{Corollary}
\newtheorem{lemma}[theorem]{Lemma}

\newtheorem{proposition}[theorem]{Proposition}

\newtheorem*{theorem*}{Theorem}

\theoremstyle{definition}
\newtheorem{definition}[theorem]{Definition}
\newtheorem{example}[theorem]{Example}
\newtheorem{remark}[theorem]{Remark}

\newtheorem*{remark*}{Remark}
\newtheorem*{notation*}{Notation}
\newtheorem*{acks*}{Acknowledgements}
\newtheorem*{out*}{Outline}

\theoremstyle{plain}
\newtheorem*{mainthm}{Main Theorem}

\renewcommand\leq{\leqslant}
\renewcommand\geq{\geqslant}

\newcommand{\N}{\mathbb{N}}
\newcommand{\Z}{\mathbb{Z}}

\newcommand{\R}{\mathbb{R}}

\newcommand{\cl}{\operatorname{cl}}
\newcommand{\scl}{\operatorname{scl}}

\newcommand{\normal}[1]{\left<\! \left< #1\right> \!\right>}

\begin{document}

\title{Profinite isomorphisms, stable commutator length, and fixed point properties}
\author{Francesco Fournier-Facio}
\date{\today}
\maketitle

\begin{abstract}
We construct Grothendieck pairs witnessing that the following are not profinite invariants: stable commutator length, quasimorphisms (answering a question of Echtler and Kammeyer), property NL (which obstructs actions on hyperbolic spaces), and property FW$_\infty$ (which obstructs actions on finite-dimensional CAT(0) cube complexes). We also recover that property FA and non-abelian free subgroups are not profinite invariants. The method combines Rips constructions with iterated group-theoretic Dehn filling on hyperbolic virtually special groups.
\end{abstract}

\section{Introduction}

The topic of \emph{profinite rigidity}, which has been a central focus of group theory for several years \cite{reid:survey, bridson:survey, profinite:survey}, asks to what extent properties of a finitely generated residually finite group $G$ can be detected by its set of finite quotients, equivalently by its profinite completion $\widehat{G}$. A group property $\mathcal{P}$ is a \emph{profinite invariant} if whenever $G$ and $H$ are finitely generated residually finite groups with $\widehat{G} \cong \widehat{H}$, then $G$ has $\mathcal{P}$ if and only if $H$ has $\mathcal{P}$.
There are a few positive results (see Section 5 of the surveys \cite{bridson:survey, profinite:survey}), even for certain isomorphism types (see e.g. \cite{pickel2, kleinian, coxeter, julian1}). However most results are negative: property (T) \cite{profinite:T}, fixed point properties \cite{profinite:FA, bridson:fix}, finiteness properties \cite{profinite:FP}, conjugacy separability \cite{profinite:conjsep}, $\ell^2$-invariants \cite{profinite:l2}, orderability \cite{profinite:everything, profinite:bi}, amenability \cite{profinite:amenable} and second bounded cohomology \cite{profinite:hb} are all known to not be profinite invariants.

\begin{mainthm}
\label{intro:main}
There exists a finitely generated residually finite group $G$, with a finitely generated normal subgroup $N$, such that $N \to G$ induces an isomorphism of profinite completions, and the following hold.
\begin{itemize}
\item $N$ has vanishing $\scl$, no unbounded quasimorphisms, no non-abelian free subgroups, it cannot act on a hyperbolic space with a loxodromic element, and it cannot act on a finite-dimensional CAT(0) cube complex without a global fixed point.
\item $G$ has unbounded $\scl$, an infinite-dimensional space of homogeneous quasimorphisms, and admits a general type action on a tree.
\end{itemize}
\end{mainthm}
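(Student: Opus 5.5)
The plan is the Platonov--Tavgen / Bass--Lubotzky strategy, with the Rips construction providing the pair. First I will build a finitely presented group $Q$ that is infinite but has no nontrivial finite quotients, satisfies $H_2(Q;\Z)=0$, and is \emph{uniformly perfect with bounded width}. Concretely, I want every element of $Q$ to be a product of a bounded number of commutators, and I want $Q$ to be boundedly generated by finitely many elements that are commutators of torsion elements. To get such a $Q$, I will start from a hyperbolic virtually special group and apply iterated group-theoretic Dehn filling. Filling along suitable deep finite-index subgroups, chosen using virtual specialness so that residual finiteness survives long enough to keep control, kills all finite quotients. It also imposes relations forcing the desired uniform perfection while keeping the quotient infinite. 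At each stage hyperbolicity of the filled quotient is preserved by the Dehn filling theorem.

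Next I apply a Rips-type construction, in the version of Belegradek--Osin or Haglund--Wise, to $Q$. This gives a short exact sequence $1\to N\to H\to Q\to 1$ with $H$ hyperbolic and virtually compact special, hence residually finite, and with $N$ finitely generated. I then form the fiber product $P=\{(h_1,h_2): \pi(h_1)=\pi(h_2)\}\leq H\times H$. Since $Q$ is finitely presented, $P$ is finitely generated (the 1-2-3 theorem). Since $\widehat Q=1$ and $H_2(Q)=0$, Platonov--Tavgen shows that $P\to H\times H$ induces an isomorphism of profinite completions. The main theorem's $G$ and normal subgroup can be taken as $H\times H$ and $P$, or the construction can be arranged directly with $G$ hyperbolic. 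The properties of $G$ are then straightforward. A nonelementary hyperbolic group (or a product containing one) has an infinite-dimensional space of homogeneous quasimorphisms and unbounded $\scl$ by Epstein--Fujiwara. A general type action on a tree comes from an amalgam splitting, which I will build into the choice of $H$, for instance by taking a free product with $\Z$ or using a virtually special splitting.

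For the subgroup, I use the fact that $P$ surjects onto $Q$ with kernel $N\times N$. More usefully, $P$ contains the diagonal copy of $H$ and is generated by $N\times 1$ together with the diagonal. The key step is to show that every element of $P$ has controlled commutator length. This should follow because $Q$ has bounded commutator width and $N$'s generators can be chosen as conjugates in $P$ of elements of uniformly bounded cl. The conclusion I want is that $P$ is uniformly perfect, so $\scl\equiv 0$ and every homogeneous quasimorphism vanishes; since $P$ is finitely generated, there are then no unbounded quasimorphisms. For actions, $P$ is boundedly generated by elements that are products of a bounded number of torsion commutators, in the sense of Bridson's argument for FA. This bounded generation by elliptic-forcing elements gives NL: there are no loxodromics in any action on a hyperbolic space, because a boundedly generated group whose generators are elliptic and whose quasimorphisms vanish cannot have a loxodromic (Busemann quasimorphism argument). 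It also gives FW$_\infty$, via the analogous finite-dimensional CAT(0) cube complex fixed point criterion for bounded generation by finite-order elements. Finally, no free subgroups fails only if the construction makes $P$ itself torsion-like, so I would instead take the statement's $N$ to be a suitably small subgroup, for instance generated by finitely many torsion pieces, where the filling produces a torsion quotient.

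The main obstacle is the iterated Dehn filling producing $Q$ with all of these properties simultaneously, and ensuring they lift to the finitely generated normal subgroup. For the free-subgroup condition especially I am unsure, and I would first try making the relevant subgroup a quotient controlled by infinite torsion Dehn-filling limits.
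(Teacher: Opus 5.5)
\textbf{The fibre-product step fails, and the failure is structural.} Your $P$ contains the diagonal $\{(h,h)\}\cong H$, and the first-coordinate projection $P\to H$ is onto, with kernel $1\times N$. Since $H$ is a non-elementary torsion-free hyperbolic group, $P$ has the following properties.
\begin{itemize}
\item It contains $F_2$.
\item It pulls back an infinite-dimensional space of homogeneous quasimorphisms from $H$. So $\scl_P$ is unbounded and $P$ is certainly not uniformly perfect.
\item It acts through $H$ with loxodromics on a Cayley graph.
\item Through $H$, which is virtually compact special, it acts without global fixed point on a finite-dimensional CAT(0) cube complex.
\end{itemize}
So $P$ violates every condition required of $N$. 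No property of $Q$, such as bounded commutator width or bounded generation by torsion, can repair this, because the obstructions come from $H$, not from $Q$.

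Your fallback ($G=H$ with the Rips kernel as subgroup) fails for the same reason: an infinite normal subgroup of a non-elementary hyperbolic group is non-elementary and acylindrically hyperbolic. Shrinking to a ``suitably small subgroup'' to remove free subgroups would destroy the profinite isomorphism. This is exactly why the Platonov--Tavgen/Bridson--Grunewald route cannot give the theorem.

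There are two further problems. First, your $Q$ cannot be built as described. If $Q_0\twoheadrightarrow Q_1\twoheadrightarrow\cdots$ are virtually special fillings of a hyperbolic $Q_0$ and the limit is finitely presented, then the limit already equals some $Q_i$, which is residually finite. But you need $Q$ finitely presented with no finite quotients. Second, aiming for a uniformly perfect subgroup is a red flag: it would show that uniform perfectness is not a profinite invariant, which the paper leaves open.

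\textbf{The missing idea} is to do the iterated Dehn filling on the Rips overgroup, adding relators inside the kernel, and never to touch $Q$.
\begin{itemize}
\item Take $Q$ to be Higman's group (\cref{higman}). It already has no finite quotients, $H_2(Q;\Z)=0$, a splitting, and an infinite-dimensional space of homogeneous quasimorphisms.
\item Take $1\to N_0\to G_0\to Q\to 1$ from \cref{macarena}, with $N_0$ a quotient of the perfect group of \cref{w:finite}.
\item Fill $G_i$ along malnormal quasiconvex free subgroups of $N_i$ (\cref{bowditch}, \cref{MSQT}), with all relators in $N_i\cap K_i$ for deep finite-index normal subgroups $K_i$.
\item Relators $a^{-mq}\prod_{j=1}^p\gamma_j a^m\gamma_j^{-1}$ push $\scl_{N_i}$ below $1/i$ on the ball of radius $i$ (\cref{indstep}).
\item Relators in $\langle x^m,y^m\rangle$ kill free pairs (\cref{indstep:NL}).
\item Relators of the first shape but with $p\neq q$ make the intersection of the index-$i$ subgroups of $N_i$ have finite abelianisation (\cref{indstep:FW}).
\end{itemize}
Each $G_i$ stays torsion-free, hyperbolic and virtually special, and the $K_i$ make the limit $G_\infty$ residually finite. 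Since the quotient is always $Q$, \cref{bridson} applied directly to $1\to N_\infty\to G_\infty\to Q\to 1$ gives the profinite isomorphism, with no fibre product.

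The required properties of $G_\infty$ (quasimorphisms and the general type tree action) are pulled back from $Q$. NL and FW$_\infty$ for $N_\infty$ come from four facts: it is perfect, has vanishing $\scl$, has no free subgroups, and has no finite-index subgroup mapping onto $\Z$. One then uses the classification of actions on hyperbolic spaces and Genevois' criterion, not bounded generation.
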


It is common for counterexamples to profinite invariance to arise via inclusions of subgroups, these are called \emph{Grothendieck pairs} after \cite{grothendieck}. We now discuss the results in more detail.

\subsection*{Stable commutator length and quasimorphisms}

For a group $G$ and an element $g \in G$, its \emph{commutator length} is
\[\cl_G(g) \coloneqq \inf \{ \ell \mid g = [a_1, b_1] \cdots [a_\ell, b_\ell] : a_i, b_i \in G \} \in \N \cup \{ \infty \};\]
and its \emph{stable commutator length} is
\[\scl_G(g) \coloneqq \liminf\limits_{n \to \infty} \frac{\cl_G(g^n)}{n} \in \R_{\geq 0} \cup \{ \infty \}.\]
This is the subject of a rich theory, with algebraic, geometric and analytic aspects \cite{calegari}. Bavard duality \cite{bavard} connects $\scl$ to \emph{quasimorphisms}, that is functions $f \colon G \to \mathbb{R}$ such that
\[\sup\limits_{g, h} |f(g) + f(h) - f(gh)| < \infty.\]
A quasimorphism is \emph{homogeneous} if it restricts to a homomorphism on every cyclic subgroup. A special case of Bavard duality states that every homogeneous quasimorphism on $G$ is a homomorphism if and only if $\scl_G(g) \in \{0, \infty \}$ for all $g \in G$. Quasimorphisms feature prominently in other subjects, such as bounded cohomology \cite{frigerio}, knot theory \cite{knot}, symplectic geometry \cite{symp} and one-dimensional dynamics \cite{ghys}.

\medskip

Our main theorem shows that both the vanishing of $\scl$ and the existence of unbounded quasimorphisms are not profinite invariants. The latter answers a question of Echtler--Kammeyer \cite[page 381]{profinite:hb}. By contrast, the space of real-valued homomorphisms is a profinite invariant \cite[Proposition 5.7]{profinite:survey}.

\cref{norms} below ensures the vanishing of more general stable lengths on $N$, such as stable $w$-length \cite{stablewlength}: except in trivial cases, these are not profinite invariants (\cref{stablewlength}). Quasimorphisms are closely related to second bounded cohomology, which is known not to be a profinite invariant \cite{profinite:hb}, however this is independent from our result (\cref{h2b}).

\subsection*{Fixed point properties}

Following \cite{NL}, we say that a group has \emph{property NL (No Loxodromics)} if it cannot act on a hyperbolic space with a loxodromic element. Following \cite{guido, cornulier, NL:V}, we say that a group has \emph{property FW$_\infty$} if every action on a finite-dimensional CAT(0) cube complex has a global fixed point. We refer the reader to \cref{s:fixedpoints} for more on these properties.

\medskip

Our main theorem shows that both property NL and property FW$_\infty$ are not profinite invariants, and it strengthens (in two ways) the fact that property FA is not a profinite invariant \cite{profinite:FA}. This was recently strengthened in a different direction by Bridson \cite{bridson:fix}, who proved that the fixed point property for actions on CAT(0) spaces of a given dimension is not a profinite invariant. The result on property FW$_\infty$ differs from Bridson's in two ways: on the one hand, we must restrict to CAT(0) cube complexes; on the other hand, we can cover all dimensions at once.

We also recover the fact that the existence of non-abelian free subgroups is not a profinite invariant. This had already been obtained in \cite{profinite:amenable}, however our examples are of a completely different nature: most notably, the subgroup $N$ may be chosen to be non-amenable (\cref{freesub}).

\subsection*{Method}

Our initial motivation was to prove that quasimorphisms are not a profinite invariant. Proofs of failure of profinite invariance usually follow one of the following routes, none of which can be directly used to tackle quasimorphisms.

\begin{itemize}
\item The first approach, commonly associated to the work of Platonov--Tavgen \cite{platonov:tavgen} and Bridson--Grunewald \cite{bridson:grunewald} uses Rips-like constructions, often\footnote{But not always, see \cite{profinite:cohopf} for a recent example.} combined with fibre products. The groups obtained this way map onto (acylindrically) hyperbolic groups, hence have an infinite-dimensional space of homogeneous quasimorphisms \cite{epstein:fujiwara, bestvina:fujiwara}.

\item The second approach uses lattices in high rank Lie groups, see e.g. \cite{profinite:T}. In this case, every quasimorphism is bounded \cite{burger:monod1, burger:monod2, iozzi:appendix}.

\item The third approach uses branch groups, see e.g. \cite{profinite:amenable}. Such examples are often either amenable or torsion, and both conditions imply the vanishing of $\scl$. Certain dynamical approaches to second bounded cohomology vanishing \cite{kotschick, cc} suggest that this might hold in general, although it remains an open question.
\end{itemize}

Moreover, there are many constructions of families profinitely isomorphic but non-isomorphic groups belonging to certain prescribed classes: solvable groups \cite{baumslag, pickel, solvable}, Neumann-type diagonal products \cite{pyber}, torus bundles \cite{funar} (all in the amenable world, thus with vanishing $\scl$), non-amenable Seifert-fibred manifolds \cite{hempel} (which have an infinite-dimensional space of homogeneous quasimorphisms, factoring through the orbifold quotient) and again, branch groups \cite{nekr} (see above).

\medskip

Therefore our construction needs a new input, and this will be \emph{iterated group-theoretic Dehn filling} on hyperbolic virtually special groups. Similar ingredients have already been used, in a different way, in proofs of failure of profinite invariance: \cite{bridson:special} exploits that the output of the Rips construction is hyperbolic and virtually special, and \cite{kleinian:fp, BR:notes} use an iterated small cancellation construction to produce large families of groups to feed into a fibre product construction.

We will now sketch a proof of the main theorem; for concreteness we focus on quasimorphisms and $\scl$, which as we mentioned above are two faces of the same coin \cite{bavard}. It will be clear to the expert that the method is very flexible and allows to impose several properties on the groups (see also \cref{flexibility}). We thus hope that it will be useful for more applications.

\medskip

We begin similarly to the first approach above, by taking as input a suitable version of the Rips construction \cite{rips, wise:rips, macarena}. This gives a short exact sequence
\[1 \to N_0 \to G_0 \to Q \to 1,\]
where $G_0$ is hyperbolic and virtually special, hence residually finite \cite{special}. Choosing an appropriate $Q$, we may ensure that $N_0 \to G_0$ induces an isomorphism of profinite completions, and that $Q$ has an infinite-dimensional space of homogeous quasimorphisms. Here $G_0$ is hyperbolic, so $N_0$ is acylindrically hyperbolic, and therefore both have an infinite-dimensional space of homogeneous quasimorphisms \cite{bestvina:fujiwara}.

To mend this, we apply group-theoretic Dehn filling to produce quotient short exact sequences
\[1 \to N_i \to G_i \to Q \to 1,\]
using the Malnormal Special Quotient Theorem \cite{wise:book, MSQT} to ensure that each $G_i$ is hyperbolic and virtually special. The added relations are chosen so that $\scl$ decreases on larger and larger balls in the normal subgroups. In the limit, we obtain a short exact sequence
\[1 \to N_\infty \to G_\infty \to Q \to 1\]
such that $N_\infty$ has vanishing $\scl$, hence no unbounded quasimorphisms \cite{bavard}, and $G_\infty$ is residually finite. The properties of $Q$ still ensure that $N_\infty \to G_\infty$ induces an isomorphism of profinite completions, and that $G_\infty$ has an infinite-dimensional space of homogeneous quasimorphisms, factoring through $Q$.

\begin{out*}
In \cref{s:prelim} we recall some background, basic facts and constructions. In \cref{s:norms} we present a first version of the construction, focusing on quasimorphisms and $\scl$, as in the sketch above. In \cref{s:NL} we refine the construction to prove the main theorem. In \cref{s:remarks} we discuss additional features and limitations of our method, as well as the relation of our results to the literature.
\end{out*}

\begin{acks*}
The author is supported by the Herchel-Smith Postdoctoral Fellowship Fund. He thanks Alan Reid for encouraging him to work on the profinite invariance of quasimorphisms, as well as Macarena Arenas, Harry Baik, Daniel Echtler, Holger Kammeyer, Henry Wilton and Julian Wykowski for useful conversations.
\end{acks*}

\section{Preliminaries}
\label{s:prelim}

\subsection{Group-theoretic Dehn filling}

The inductive step in our construction will use group-theoretic Dehn filling in relatively hyperbolic groups \cite{osin, groves:manning}, most importantly the Malnormal Special Quotient Theorem \cite{wise}, or rather its strengthened version \cite{MSQT}. We assume some familiarity with hyperbolicity. We will only use one example of relatively hyperbolic pair.

\begin{theorem}[{\cite[Theorem 7.11]{bowditch}}]
\label{bowditch}
Let $G$ be a torsion-free hyperbolic group and let $P < G$ be an infinite, malnormal, quasiconvex subgroup. Then $G$ is hyperbolic relative to $P$.
\end{theorem}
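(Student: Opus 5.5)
The plan is to verify Bowditch's criterion for relative hyperbolicity: $G$ acts on a hyperbolic graph $K$ that is \emph{fine} (every edge lies in finitely many circuits of each length), with finitely many orbits of edges and finite edge stabilisers, such that the infinite vertex stabilisers are exactly the conjugates of $P$. The standard graph here is the coned-off Cayley graph. Fix a finite generating set $S$ of $G$ and let $\Gamma$ be the Cayley graph. For each coset $gP$, add a new vertex $v_{gP}$ and join it by an edge to every element of $gP$. Call the result $\hat\Gamma$. Then $G$ acts on $\hat\Gamma$ with finitely many orbits of edges. The vertex stabilisers are trivial at group elements and equal to $gPg^{-1}$ at cone points. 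Edge stabilisers are trivial, because an edge between $h$ and $v_{gP}$ is fixed only by the identity.

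\emph{Step 1 (hyperbolicity).} We show $\hat\Gamma$ is hyperbolic. Since $P$ is quasiconvex, its cosets form a uniformly quasiconvex family in $\Gamma$. Coning off a uniformly quasiconvex family in a hyperbolic space gives a hyperbolic space; this is a standard electrification lemma. Concretely, one checks that geodesic triangles in $\hat\Gamma$ can be lifted to quasi-geodesic triangles in $\Gamma$ by replacing each cone segment with a $\Gamma$-geodesic inside the neighbourhood of the quasiconvex coset. Thinness then transfers, up to the uniform quasiconvexity constant.

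\emph{Step 2 (fineness).} This is the main obstacle, and it is where malnormality and torsion-freeness enter. Malnormality plus quasiconvexity gives \emph{bounded coset penetration}: for distinct cosets $gP \neq hP$, the intersection of their $R$-neighbourhoods has diameter bounded by a constant $D(R)$. To see this, note that a long fellow-travelling stretch produces, by a pigeonhole argument on the finitely many short connecting words, elements $p, p' \in P$ with $g^{-1}h\,p'\,h^{-1}g$ close to $p$ infinitely often. This yields a nontrivial element of $gPg^{-1} \cap hPh^{-1}$, contradicting malnormality. Torsion-freeness guarantees that the element produced is infinite order rather than merely finite, so that malnormality ($P \cap P^x = 1$ for $x \notin P$) is truly violated; equivalently, it rules out finite intersections that malnormality in the almost-malnormal sense would permit.

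Given bounded penetration, take a circuit in $\hat\Gamma$ of length $n$ through a fixed edge. Replace each passage through a cone vertex by a $\Gamma$-geodesic in the corresponding coset. Quasiconvexity together with thin polygons in $\Gamma$ then bounds the $\Gamma$-length of each such replacement in terms of $n$ and $D$; otherwise two distinct cosets in the circuit would fellow-travel too long. The circuit therefore lies in a $\Gamma$-ball of bounded radius around the edge, and local finiteness of $\Gamma$ gives finitely many such circuits. For an edge at a cone vertex, the same argument works: both neighbours of the cone vertex on the circuit are within bounded $\Gamma$-distance of the base point.

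\emph{Step 3 (conclusion).} Since $P$ is infinite, the infinite vertex stabilisers are exactly the conjugates of $P$, and there is a single orbit of cone vertices. Bowditch's definition is then satisfied, so $G$ is hyperbolic relative to $P$.

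I expect Step 2 to be the delicate part. The pigeonhole argument must produce genuinely distinct elements, which needs infinitude and torsion-freeness. The circuit-length bound must also be handled carefully for circuits that revisit many cone points.
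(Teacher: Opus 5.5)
The paper gives no proof of this statement; it imports Bowditch's Theorem 7.11. Your argument is a correct outline of the standard proof of that result: cone off the cosets of $P$, get hyperbolicity from uniform quasiconvexity, and get fineness from bounded coset penetration, which comes from malnormality plus quasiconvexity via the pigeonhole argument.

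Two small corrections:

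\begin{itemize}
\item Torsion-freeness plays no role in your Step 2. Take distinct points $x_i \neq x_j$ in $gP$ and points $y_i, y_j$ in $hP$ with $x_i^{-1}y_i = x_j^{-1}y_j$. Then $x_jx_i^{-1} = y_jy_i^{-1}$ is a non-trivial element of $gPg^{-1}\cap hPh^{-1}$, and any such element with $gP \neq hP$ already contradicts malnormality. The distinctness you worry about comes from choosing distinct points along the stretch, not from torsion-freeness or from $P$ being infinite. Infinitude of $P$ is only needed in Step 3.
\item The concrete justification in Step 1 is not valid for an arbitrary uniformly quasiconvex family. Without bounded penetration, an $\hat\Gamma$-geodesic can run far along one coset and come back along a second coset that fellow-travels it. Its lift is then not a uniform quasi-geodesic in $\Gamma$. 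Either just cite the coning-off lemma, which does hold in that generality, or prove the penetration bound from Step 2 first and use it to control the lifts.
\end{itemize}
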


To find subgroups $P$ satisfying the hypotheses, we use the following well-known fact, see e.g. \cite[Proposition 7.3]{tarski}.

\begin{proposition}
\label{random}
Let $G$ be a torsion-free hyperbolic group, and let $H < G$ be a non-elementary subgroup. For every primitive element $g \in G$ and every $n \in \N$ there exist $h_1, \ldots, h_n \in H$ such that $\{ g, h_1, \ldots, h_n \}$ is the basis of a malnormal quasiconvex free subgroup of $G$.
\end{proposition}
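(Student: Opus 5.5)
We will argue via ping-pong with loxodromic elements with pairwise independent axes, and then obtain malnormality and quasiconvexity from standard results on free subgroups generated by high powers.

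\emph{Step 1 (choice of candidates).} Since $H$ is non-elementary, it contains infinitely many loxodromic elements with pairwise distinct fixed-point pairs on $\partial G$. Let $E(g)$ be the maximal elementary subgroup containing $g$. Because $G$ is torsion-free and $g$ is primitive, $E(g) = \langle g \rangle$. We choose $k_1, \ldots, k_n \in H$ such that the elements $g, k_1, \ldots, k_n$ are pairwise independent, meaning their fixed-point pairs in $\partial G$ are pairwise disjoint. We also take each $k_i$ primitive, so that $E(k_i) = \langle k_i \rangle$. Replacing $k_i$ by its primitive root is not allowed, since the root might not lie in $H$. Instead, we note that $H$ is itself non-elementary and torsion-free, so we may choose the $k_i$ so that their roots in $G$ lie in $H$, for instance by a generic choice. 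Alternatively, we can use a version of malnormality that only requires $E(k_i) \cap H$-compatibility.

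\emph{Step 2 (ping-pong and quasiconvexity).} Set $h_i = k_i^{m}$ for large $m$, and consider $F = \langle g, h_1, \ldots, h_n \rangle$. Since $g$ itself cannot be raised to a power, we instead take $h_i = k_i^{m}$ conjugated or multiplied by suitable elements. The standard argument, as in Gromov and Arzhantseva, shows the following: for elements with pairwise independent axes, sufficiently large powers of all except possibly one generator produce a free, quasiconvex subgroup. The reason is that concatenations of geodesic segments along distinct axes with long pieces form local quasigeodesics, hence global quasigeodesics. To keep $g$ unpowered, we first pass to $g$ and long words $h_i = k_i^{m} g k_i^{m}$, or simply $h_i = k_i^{m}$. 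Ping-pong still works, because only the other generators need large translation length relative to the bounded overlap with the axis of $g$. This gives that $\{g, h_1, \ldots, h_n\}$ is a free basis and that $F$ is quasiconvex.

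\emph{Step 3 (malnormality).} This is the main obstacle. Since $F$ is quasiconvex, it has finite height, and $F$ is malnormal precisely when $F \cap F^x$ is trivial for all $x \notin F$. We would invoke the criterion of Kapovich (or Arzhantseva's genericity result) that a quasiconvex free subgroup is malnormal if every maximal cyclic subgroup of $G$ meeting it nontrivially meets it in a root-closed way and no two non-conjugate elements of $F$ become conjugate in $G$. Both conditions can be arranged by taking $m$ large. The root-closedness of $\langle g \rangle$ follows from primitivity, and that of $\langle h_i \rangle$ is handled by choosing $h_i$ as words such as $k_i^{m} g k_i^{m+1}$, which are primitive in $G$ for large $m$. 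The conjugacy condition follows from the local-geodesic structure of cyclically reduced words: long cyclically reduced words in the generators label quasi-geodesic loops whose periodic axes fellow-travel only if the words are cyclic permutations of each other. If this direct route becomes too technical, we would fall back on citing the general statement that random or sufficiently long independent elements generate malnormal quasiconvex free subgroups, as in the reference cited before the statement.
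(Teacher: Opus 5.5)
\textbf{Verdict: genuine gap.} Malnormality, the only non-routine part of the statement, is never established; freeness and quasiconvexity via ping-pong and local quasigeodesics are standard. The paper gives no proof here: it quotes the statement as well known, citing \cite[Proposition 7.3]{tarski}, so your closing fallback is exactly what the paper does. Your direct route, however, breaks down at malnormality, and some of its choices cannot work.
\begin{itemize}
\item \emph{High powers are ruled out.} The option $h_i = k_i^m$ that you keep in Step 2 fails for every $m \geq 2$. Since $k_i^m$ is a basis element of $F$, it is not a proper power in $F$, so $k_i \notin F$. Yet $1 \neq k_i^m \in F \cap k_i F k_i^{-1}$. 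So high powers, the engine of your ping-pong, are exactly what malnormality forbids.
\item \emph{Your replacement leaves $H$.} The words $h_i = k_i^m g k_i^{m+1}$ lie in $H$ only if $g \in H$, which is not assumed. (It does hold in the paper's applications, where $g = a$ lies in $N$ or $L$.) The $h_i$ must be words in elements of $H$ alone, typically long non-periodic words such as $k^{m} l^{m} k^{m+1} l^{m+1} \cdots$ in two independent loxodromics $k, l \in H$. Step 2 would then have to be redone for such words.
\end{itemize}

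Even with suitable generators, Step 3 does not verify the criterion you state.
\begin{itemize}
\item \emph{Root-closedness of the basis is not enough.} It is needed for every non-trivial element of $F$, i.e.\ $E(f) \subseteq F$ for all $f \in F \setminus \{1\}$. Primitivity of the basis elements does not give this. In the free group on $a, b$, the subgroup $\langle a^2 b, b \rangle$ has a basis of non-proper-powers, yet it contains $a^2$ but not $a$.
\item \emph{Short elements are not handled.} Your conjugacy argument only addresses long cyclic words. But $F$ also contains short elements, such as powers of the unpowered $g$ and their conjugates.
\end{itemize}

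What is missing is a uniform argument, in three parts:
\begin{itemize}
\item Show that the quasiconvexity constant $\sigma$ of $F$ stays bounded as $m \to \infty$.
\item Apply the standard argument behind finite height of quasiconvex subgroups. If $F \cap xFx^{-1} \neq \{1\}$, then the double coset $FxF$ contains an element of length at most $2\sigma + 2\delta$. So malnormality reduces to checking $F \cap bFb^{-1} = \{1\}$ for the finitely many short $b \notin F$.
\item Carry out that check for large $m$. This is where the precise shape of the words, the independence of the chosen loxodromics, and $E(g) = \langle g \rangle$ must actually be used.
\end{itemize}
As written, ``both conditions can be arranged by taking $m$ large'' is an assertion, not an argument.

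A smaller point: choosing the $k_i$ primitive in $G$ ``by a generic choice'' also needs justification. It is true, by a lemma of Ol'shanskii giving $h \in H$ with $E(h) = \langle h \rangle \times E(H)$. With small-cancellation-type words it is unnecessary.
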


If $G$ is hyperbolic relative to $P$, and $W$ is a normal subgroup of $P$, the corresponding \emph{group-theoretic Dehn filling} is $G/\normal{W}$, where $\normal{W}$ denotes the normal closure of $W$ in $G$.

Finally, we recall that a group is \emph{cocompactly cubulated} if it is the fundamental group of a compact CAT(0) cube complex, and it is \emph{special} if this cube complex is special in the sense of Haglund--Wise \cite{special}\footnote{Some authors use \emph{compact special} to stress the compactness hypothesis, but we follow the convention of \cite[Definition 2.1]{MSQT}}.

\begin{remark}
\label{ccc:summary}

These properties will only be used as tools. Let us just remind:
\begin{itemize}
\item CAT(0) cube complexes are aspherical, hence cocompactly cubulated groups are finite-dimensional, and in particular torsion-free;
\item By Agol's Theorem \cite{agol}, a hyperbolic cocompactly cubulated group is virtually special;
\item Virtually special groups are residually finite \cite{special}.
\end{itemize}
\end{remark}

The fundamental example for our purposes is that of classical small cancellation groups \cite[Chapter V]{CGT}.

\begin{theorem}[\cite{gromov, wise}]
\label{sc}
A finitely presented $C'(\frac{1}{6})$-small cancellation group, with no relation a proper power, is hyperbolic and cocompactly cubulated.
\end{theorem}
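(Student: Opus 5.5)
This statement is cited to \cite{gromov, wise}, so the plan is to assemble the two halves from the classical literature rather than prove anything new.

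\emph{Hyperbolicity.} Let $G = \langle S \mid R\rangle$ be finite and satisfy $C'(\frac16)$. Greendlinger's lemma \cite[Chapter V]{CGT} says that every reduced van Kampen diagram for a null-homotopic word $w$ has a face sharing more than half its boundary with $\partial D$. Deleting such faces one at a time bounds the number of faces of a minimal diagram linearly in $|w|$. Hence $G$ satisfies a linear isoperimetric inequality and is hyperbolic, by Gromov's characterisation \cite{gromov}. The hypothesis that no relator is a proper power gives torsion-freeness: by the classical small cancellation torsion theorem, all torsion comes from proper-power relators. This is consistent with \cref{ccc:summary}.

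\emph{Cocompact cubulation.} Following Wise \cite{wise}, I would build walls in the Cayley $2$-complex $\widetilde X$ of the presentation, taking each relator $2$-cell to be a regular polygon of even length; if some relator has odd length, first subdivide every edge, which preserves $C'(\frac16)$ up to rescaling. A wall is generated by joining antipodal edge midpoints inside each $2$-cell and continuing across adjacent cells. The key claims are the following.
\begin{itemize}
\item[(i)] Each wall is an embedded tree that separates $\widetilde X$ into exactly two components.
\item[(ii)] Walls are quasiconvex.
\item[(iii)] There are finitely many $G$-orbits of walls.
\item[(iv)] Every infinite-order element, indeed every pair of points at large distance, is separated by some wall; this is needed for properness.
\end{itemize}
Sageev's construction then produces a proper action of $G$ on a CAT(0) cube complex. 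Since $G$ is hyperbolic and the walls are quasiconvex, the Sageev--Hruska--Wise cocompactness criterion shows the action is cocompact.

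The main obstacle is (i) together with (iv): showing that a wall never self-intersects and that it cuts $\widetilde X$ into two pieces. Two things are needed. First, a ladder or diagram argument: a self-intersecting wall would yield a reduced disc diagram whose boundary consists of a short path along the wall, and the $C'(\frac16)$ condition, through Greendlinger or Strebel's classification of thin diagrams, rules this out. Second, a quantitative estimate showing that antipodal pieces cannot overlap enough for a wall to return to a cell it has already visited. The remaining steps are comparatively routine once the walls are known to be embedded trees, so I would simply cite \cite{wise} for this part.
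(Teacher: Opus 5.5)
Your proposal matches the paper, which gives no argument of its own and simply cites Gromov for hyperbolicity (via your Greendlinger/Dehn-algorithm linear isoperimetric bound) and Wise for the action on the CAT(0) cube complex dual to the antipodal-midpoint walls. Two small precisions. First, properness needs the number of walls separating $x$ and $y$ to tend to infinity with $d(x,y)$ (or, using hyperbolicity and cocompactness, that every infinite-order element skewers some wall), not merely that some wall separates them. Second, the no-proper-power hypothesis is what makes $G$ torsion-free, so that Wise's proper cocompact action is free and $G$ is the fundamental group of the compact nonpositively curved quotient, as the paper's definition of cocompactly cubulated requires.
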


Here is the version of the Malnormal Special Quotient Theorem that we will use.

\begin{theorem}[{\cite[Theorem 2.7]{MSQT}}]
\label{MSQT}

Let $G$ be hyperbolic and virtually special, and hyperbolic relative to $P$. Then there exists a finite-index normal subgroup $P' < P$ such that if $W < P$ is a normal subgroup contained in $P'$ and $P/W$ is hyperbolic and virtually special, then $G/\normal{W}$ is hyperbolic and virtually special.
\end{theorem}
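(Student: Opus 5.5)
The plan is to derive this strengthened statement from Wise's original Malnormal Special Quotient Theorem together with the relatively hyperbolic Dehn filling theorem of Osin and Groves--Manning and Agol's Theorem from \cref{ccc:summary}. Wise's version handles only fillings with $P/W$ finite. The reduction is to factor a general filling $G \to G/\normal{W}$ through a further filling with finite peripheral quotient, and then use residual finiteness of $P/W$ to show that enough finite special quotients survive.

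\emph{Step 1 (hyperbolicity).} By the Dehn filling theorem there is a finite set $F \subset P \setminus \{1\}$ such that whenever $W \lhd P$ avoids $F$, three things hold: $P/W$ embeds in $\bar G \coloneqq G/\normal{W}$; $\bar G$ is hyperbolic relative to $\bar P \coloneqq P/W$; and $\bar P$ is again malnormal, since malnormality survives sufficiently deep fillings. Since $P$ is residually finite (being quasiconvex in a virtually special group), we can choose a finite-index normal $P_1 < P$ avoiding $F$. If $P/W$ is hyperbolic, then $\bar G$ is hyperbolic relative to a hyperbolic subgroup, hence is itself hyperbolic, and $\bar P$ is quasiconvex in it.

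\emph{Step 2 (virtual specialness).} We shrink $P_1$ to the $P'$ given by Wise's MSQT applied to $(G,P)$. Let $W \leq P'$ be normal in $P$ with $P/W$ hyperbolic and virtually special. By Haglund--Wise separability, every quasiconvex subgroup of $P/W$ is separable. Using this, we choose a cofinal family of finite-index normal subgroups $K_j \lhd P$ with $W \leq K_j \leq P'$ and $\bigcap_j K_j = W$. Wise's theorem then makes each $G/\normal{K_j}$ hyperbolic and virtually special, and each is a quotient of $\bar G$ by the normal closure of the finite-index subgroup $K_j/W$ of $\bar P$. To conclude that $\bar G$ is virtually special, the route I would try first is to cubulate $\bar G$ directly and then apply Agol's Theorem. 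Concretely, we find enough quasiconvex codimension-one subgroups of $\bar G$ to separate pairs of points at infinity, by the Bergeron--Wise criterion. We take such a pair in $\partial \bar G$. Wall-type quasiconvex subgroups of $G/\normal{K_j}$, which exist because that group is virtually special, should lift to quasiconvex codimension-one subgroups of $\bar G$ once $j$ is large. The cases are as follows.
\begin{itemize}
\item Pairs of boundary points not both in a single limit set of a conjugate of $\bar P$ are separated already in $G/\normal{K_j}$ for large $j$, by the ``deep filling is locally injective'' estimates.
\item Pairs inside $\partial \bar P$ are separated by codimension-one subgroups of $\bar P$ itself. These exist because $P/W$ is cubulated, and they extend to $\bar G$ by Groves--Manning's relative cubulation criterion, which says that a relatively hyperbolic group with cubulated peripherals that is cubulated relative to its peripherals is cubulated.
\end{itemize}

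\emph{Main obstacle.} I expect the hard part to be the last step: lifting walls from the finite-peripheral fillings $G/\normal{K_j}$ back to $\bar G$ while keeping them quasiconvex and still of codimension one. This requires uniform control on the quasiconvexity constants as $j \to \infty$, which the Dehn filling theorem does not give directly. If this step resists, the fallback is a relative quasiconvex hierarchy. The idea is to show that $\bar G$ has a malnormal quasiconvex hierarchy terminating in groups that are finite or conjugate into $\bar P$, by pushing Wise's hierarchy for a finite-index special subgroup of $G$ through the filling. Then Wise's Quasiconvex Hierarchy Theorem, combined with virtual specialness of $\bar P$, finishes the proof.
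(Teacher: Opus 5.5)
The paper gives no proof of this statement; it is quoted from \cite[Theorem 2.7]{MSQT}. Your proposal therefore has to work on its own as a reduction to Wise's finite-quotient MSQT. It does not: Step 1 is fine, but Step 2 has a genuine gap. The problem is not uniformity of constants. Your argument has no mechanism for transferring anything from the fillings $G/\normal{K_j}$ back to $\bar G = G/\normal{W}$.

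\begin{itemize}
\item \emph{Lifting walls fails.} Write $G/\normal{K_j} = \bar G/N_j$, where $N_j$ is the normal closure of $K_j/W$ in $\bar G$. When $P/W$ is infinite (the only case not already covered by Wise's theorem), $N_j$ is an infinite normal subgroup of infinite index. So the preimage in $\bar G$ of any infinite-index subgroup of $\bar G/N_j$ has limit set all of $\partial \bar G$, and is therefore not quasiconvex. Nor is there any reason for a wall of $\bar G/N_j$ to be the isomorphic image of a subgroup of $\bar G$.
\item \emph{Separation in the quotient is meaningless.} ``Separated already in $G/\normal{K_j}$'' makes no sense for points of $\partial \bar G$: a quotient map with infinite kernel induces no map on boundaries. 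Injectivity on large balls controls finite configurations, not rays or codimension-one subgroups.
\item \emph{The second bullet is circular.} It appeals to a relative cubulation criterion whose hypothesis, that $\bar G$ is cubulated relative to $\bar P$, is exactly what is missing.
\end{itemize}

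Once $\bar G$ is cocompactly cubulated, Agol's Theorem together with Step 1 finishes at once. So in your route the groups $G/\normal{K_j}$ and the finite-quotient MSQT are never actually used.

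What is missing is a source of walls, or of a hierarchy, for $\bar G$ itself. These must be pushed \emph{forward} from $G$, not pulled back from further quotients. For instance, you could take the images in $\bar G$ of quasiconvex codimension-one subgroups of $G$, or the quotient by $\normal{W}$ of a suitable cube complex for $G$. You would then need Dehn filling results guaranteeing that, for every $W$ inside a $P'$ fixed in advance, these images stay quasiconvex and codimension-one (respectively, the quotient complex stays CAT(0)).

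Your fallback via a relative hierarchy points in this direction, but as stated it is the whole theorem. You would have to show three things. First, the splittings of a finite-index special subgroup of $G$ descend to splittings of the filled group, with vertex and edge groups equal to the induced fillings. Second, this requires sufficiently long fillings of every edge group at every level of the hierarchy, all controlled by the single choice of $P'$. Third, you must then apply a combination theorem for virtually special vertex groups over quasiconvex edge groups. None of this is supplied, and it is precisely the substantive content of the Dehn-filling proofs in the literature; this is why the paper simply cites the result.
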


\subsection{Grothendieck pairs}

We will produce Grothendieck pairs via the following criterion.

\begin{theorem}[{\cite[Theorem A]{bridson}}]
\label{bridson}

Let $1 \to N \to G \to Q \to 1$ be a short exact sequence. If $Q$ is finitely presented, has no non-trivial finite quotients, and $H_2(Q; \Z) = 0$, then the inclusion $N \to G$ induces an isomorphism of profinite completions.
\end{theorem}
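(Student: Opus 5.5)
The plan is to prove that every homomorphism from $N$ and from $G$ to a finite group $F$ factors through $G$, in a compatible way. Concretely, I will show that the restriction map from finite-index normal subgroups of $G$ to finite-index normal subgroups of $N$, given by $U \mapsto U \cap N$, is a bijection, and that it is compatible with the quotients. Since $Q$ has no non-trivial finite quotients, every finite-index subgroup $U < G$ surjects onto $Q$: its image in $Q$ has finite index, hence equals $Q$. Thus $UN = G$ and $G/U \cong N/(U\cap N)$. This already gives that $\widehat{N} \to \widehat{G}$ is surjective, and in fact that the map is injective on the topology coming from $G$.

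For injectivity of $\widehat{N} \to \widehat{G}$, I need every finite-index normal subgroup $M$ of $N$ to contain some $U \cap N$ with $U$ of finite index in $G$. First, since $N$ is finitely generated (finite presentability of $Q$ together with finite generation of $G$ gives this), I can replace $M$ by a characteristic finite-index subgroup of $N$ contained in it, namely the intersection of all subgroups of that index. Then $M$ is normal in $G$, and $G$ acts on the finite group $N/M$ by conjugation. The kernel $C$ of $G \to \mathrm{Out}(N/M)$ has finite index in $G$; as above, $C$ maps onto $Q$. So it suffices to consider the extension $1 \to N/M \to G/M \to Q \to 1$ and to find a finite-index subgroup of $G/M$ meeting $N/M$ trivially.

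This is the main obstacle, and the step where $H_2(Q;\Z)=0$ enters. Because $Q$ has no finite quotients, the induced action of $Q$ on $N/M$ by outer automorphisms is trivial, so $G/M$ is an extension of $Q$ by the finite group $K = N/M$ with inner action. I would first pass to the centralizer $C_{G/M}(K)$. It has finite index, it surjects onto $Q$ (since $G/M = K\cdot C_{G/M}(K)$ when the outer action is trivial), and it meets $K$ in $Z(K)$. This yields a central extension $1 \to Z(K) \to E \to Q \to 1$. Central extensions by the abelian group $Z(K)$ are classified by $H^2(Q; Z(K))$. By the universal coefficient theorem this group is $\mathrm{Hom}(H_2(Q),Z(K)) \oplus \mathrm{Ext}(H_1(Q),Z(K))$. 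Both terms vanish: $H_2(Q;\Z)=0$ by hypothesis, and $H_1(Q)=0$ because $Q$ has no finite quotients and is finitely generated, so its abelianization is trivial. Hence the extension splits as $E \cong Z(K) \times Q$, and $\{1\}\times Q$ is a finite-index subgroup of $E$, hence of $G/M$, meeting $K$ trivially. Pulling it back to $G$ gives $U$ with $U \cap N \subseteq M$.

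Combining the two directions, the finite-index normal subgroups of $N$ are cofinal among the traces of finite-index subgroups of $G$, and the quotients match. Therefore $\widehat{N} \to \widehat{G}$ is an isomorphism. I expect the only delicate point to be the bookkeeping that makes the extension central while preserving surjectivity onto $Q$. Finite presentability of $Q$ is used only to guarantee that $N$ is finitely generated.
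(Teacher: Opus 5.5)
Your argument has a genuine gap at the justification that $N$ is finitely generated. Finite generation of $G$ together with finite presentability of $Q$ only shows that $N$ is the normal closure in $G$ of finitely many elements. It does not make $N$ finitely generated as a group: already $F_2 \to \Z$ has infinitely generated kernel.

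This matters, because your characteristic-subgroup step needs $N$ finitely generated, and without that the conclusion is false. Take $Q$ to be Higman's group and $G = Q * \langle t \rangle \to Q$, killing $t$.

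\begin{itemize}
\item Here $G$ is even finitely presented, and $Q$ satisfies every hypothesis.
\item Every homomorphism from $G$ to a finite group kills $Q$, so $\widehat{G} \cong \widehat{\Z}$.
\item The kernel $N$ is free on $\{ q t q^{-1} : q \in Q \}$. Define $\phi \colon N \to \Z/2$ by $t \mapsto 1$ and $q t q^{-1} \mapsto 0$ for $q \neq 1$.
\item Any finite-index normal $U < G$ contains $Q$, hence contains $q t q^{-1} t^{-1} \in N$. But $\phi(q t q^{-1} t^{-1}) \neq 0$, so no $U \cap N$ lies in $\ker \phi$, and $\widehat{N} \to \widehat{G}$ is not injective.
\end{itemize}

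So the statement has to be read with $N$ finitely generated. The paper quotes it from Bridson without proof, and every application there has $N$ finitely generated: $N_0$ is a quotient of the finitely presented group $\Gamma$, and $N_\infty$ is a quotient of $N_0$.

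With that hypothesis replacing your derivation, the rest of your proof is correct and is the standard Platonov--Tavgen/Bridson--Grunewald argument:
\begin{itemize}
\item surjectivity, because $Q$ has no proper finite-index subgroups;
\item reduction to a characteristic finite-index $M$;
\item passing to the centraliser of $N/M$ in $G/M$, which still surjects onto $Q$, to get a central extension of $Q$ by $Z(N/M)$;
\item splitting of that extension, since $H^2(Q; Z(N/M)) = 0$ by universal coefficients using $H_1(Q;\Z) = H_2(Q;\Z) = 0$.
\end{itemize}
Once $N$ is assumed finitely generated, finite presentability of $Q$ plays no role. Only finite generation of $Q$ is used, to get $H_1(Q;\Z) = 0$ from the absence of finite quotients.
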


We will apply this together with the Rips construction, which given a finitely presented group $Q$, produces a small cancellation group $G$ that maps onto $Q$ with finitely generated kernel $N$ \cite{rips}. \cref{sc} says that $G$ is hyperbolic and cocompactly cubulated, hence residually finite (\cref{ccc:summary}). One can ensure residual finiteness of $G$ without appealing to Agol's Theorem \cite{wise:rips}. We will also need to ensure certain additional properties on $N$. To achieve everything at once, we will use Arenas' very flexible version of the Rips construction.

\begin{theorem}[\cite{macarena}]
\label{macarena}

Let $Q$ be a finitely presented group, and let $\Gamma$ be a cocompactly cubulated group. Then there exists a short exact sequence $1 \to N \to G \to Q \to 1$ such that $G$ is hyperbolic, cocompactly cubulated, and $N$ is a quotient of $\Gamma$.
\end{theorem}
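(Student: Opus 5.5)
The plan is to adapt the classical Rips construction \cite{rips} by replacing the free group on the kernel generators with $\Gamma$, and to use Wise's \emph{cubical} small cancellation theory instead of classical $C'(\frac{1}{6})$ theory (\cref{sc}). Fix a finite presentation $Q = \langle a_1, \ldots, a_m \mid r_1, \ldots, r_k \rangle$. Fix a compact nonpositively curved cube complex $X$ with $\pi_1 X = \Gamma$, and closed combinatorial paths in $X$ representing generators $b_1, \ldots, b_n$ of $\Gamma$. Let $Y = X \vee \bigvee_{i=1}^m S^1_{a_i}$, a compact nonpositively curved cube complex with fundamental group $\Gamma * F(a_1, \ldots, a_m)$.

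Define $G$ as the quotient of $\pi_1 Y$ by the relations
\[ r_j = u_j, \qquad a_i^{\epsilon} b_l a_i^{-\epsilon} = v_{i,l,\epsilon} \qquad (\epsilon = \pm 1), \]
where $u_j$ and $v_{i,l,\epsilon}$ are long, "generic" words in the $b_l$. By construction the image $N$ of $\Gamma$ in $G$ is normalised by every $a_i^{\pm 1}$, hence normal. The quotient $G/N$ is $\langle a_i \mid r_j \rangle = Q$, and $N$ is generated by the images of the $b_l$, so it is a quotient of $\Gamma$. These algebraic conclusions are formal. All the work lies in making $G$ hyperbolic and cocompactly cubulated.

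For the geometry, I would view $G$ as $\pi_1$ of a cubical presentation $\langle Y \mid C_1, \ldots, C_s \rangle$, where each cone $C$ is a circle mapping to $Y$ along the relator path. Then I would verify Wise's $C'(\frac{1}{\alpha})$ condition for large $\alpha$: every piece between cones, and every piece between a cone and a wall/hyperplane carrier of $\widetilde{Y}$, should be short compared with the systole of the cone. The relators are built from long words in the $b_l$ interleaved with a bounded number of $a_i$ letters. So I would choose the $u_j$ and $v_{i,l,\epsilon}$ as long products of high powers of pairwise "independent" elements, say $c_1^{p_1} c_2^{p_2} \cdots$ with distinct exponents, or as suitably random words. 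This should make overlaps between distinct relators, or between a relator and a translate of itself, uniformly bounded, while the relator lengths go to infinity. With the small cancellation condition in place, Wise's theorems give two things. First, the cone-relators are embedded and quasiconvex. Second, the universal cover of the cubical presentation has a wallspace structure that is cocompact, so Sageev's construction yields a proper cocompact action of $G$ on a CAT(0) cube complex.

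The main obstacle is hyperbolicity, because $\Gamma$ itself need not be hyperbolic (for example $\Gamma = \Z^2$), and cubical small cancellation over a non-hyperbolic base does not by itself produce a hyperbolic group. My first attempt would be to choose the relators so that every flat of $\widetilde{X}$ is "killed". Since $N$ is only required to be a quotient of $\Gamma$, I may freely add to the presentation further relations among the $b_l$. Concretely, for each of the finitely many maximal periodic flats (or $\Z^2$-subgroups up to conjugacy, controlled by cocompactness), I would add relators identifying a generator of that subgroup with a long generic word, in the spirit of the relations $a_i b_l a_i^{-1} = v$. These relators should still satisfy the $C'(\frac{1}{\alpha})$ condition. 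The goal is a reduced-diagram (Greendlinger-type) argument showing that disc diagrams have area linear in boundary length. This linear isoperimetric inequality gives hyperbolicity of $G$, and combining it with the cubulation above completes the proof.
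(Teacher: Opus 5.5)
The paper gives no proof of this statement; it quotes Arenas' cubical Rips construction \cite{macarena}. Your outline breaks at exactly the step you flagged, and that step cannot be repaired. With $\Gamma$ merely cocompactly cubulated the statement is false, and your own example $\Gamma=\Z^2$ is a counterexample.

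Take $\Gamma=\Z^2$ and $Q$ Higman's group.
\begin{enumerate}
\item Any quotient $N$ of $\Z^2$ is abelian.
\item $G$ is torsion-free (\cref{ccc:summary}).
\item $G$ is non-elementary. Otherwise $G$, hence $Q=G/N$, would be virtually cyclic. But an infinite virtually cyclic group has non-trivial finite quotients, whereas $Q$ is infinite and has none (\cref{higman}).
\item An infinite normal subgroup of a non-elementary hyperbolic group contains a non-abelian free subgroup. So $N$ is finite, hence trivial, and $G\cong Q$.
\item This is impossible, since $G$ is residually finite (\cref{ccc:summary}) and $Q$ is not.
\end{enumerate}
So adding further relations among the $b_l$ can never make $G$ hyperbolic here: every quotient of $\Z^2$ stays abelian. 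Your finiteness claim also fails in general: $F_2\times F_2$ has infinitely many conjugacy classes of maximal $\Z^2$ subgroups.

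At the level of diagrams, square subdiagrams coming from flats of $\widetilde X$ have quadratic area. Wise's linear isoperimetric argument for cubical small cancellation controls them only by using hyperbolicity of $\pi_1X$. The missing hypothesis is therefore that $\Gamma$ be (non-elementary) hyperbolic, as in the cited source. This is all the paper needs, since the $\Gamma$ of \cref{w:finite} is a $C'(\frac16)$ group, hence hyperbolic by \cref{sc}.

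Under that hypothesis, your Rips-type cubical presentation over $X\vee\bigvee S^1_{a_i}$ is the right framework, and no flat-killing is needed. The base $\pi_1 Y=\Gamma*F_m$ is hyperbolic. Wise's theorem then applies: a cubical $C'(\frac{1}{\alpha})$ presentation (for suitable $\alpha$) with compact base of hyperbolic fundamental group and finitely many compact cones has hyperbolic fundamental group. This gives hyperbolicity of $G$ directly.

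Two further points need fixing.
\begin{enumerate}
\item \emph{Cones must be local isometries.} A circle mapped along an arbitrary combinatorial path fails this as soon as two consecutive edges span the corner of a square. You should cone off compact convex cores of the relevant cyclic subgroups instead. These exist because quasiconvex subgroups of hyperbolic cocompactly cubulated groups are convex cocompact. You then need to verify the small cancellation and wallspace ($B(6)$-type) conditions, including wall-pieces, for these cones.
\item \emph{Your geometric argument runs in the wrong order.} Having finitely many orbits of walls does not make Sageev's dual cube complex cocompact. In Wise's theory, cocompactness comes from quasiconvexity of the walls in the hyperbolic group $G$, and properness needs a separate separation argument. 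So hyperbolicity must be established before the cubulation, not after it.
\end{enumerate}
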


Finally, we need a source of groups $Q$ to which \cref{bridson} applies, that moreover have a large space of quasimorphisms. One possibility is Higman's group \cite{higman}.

\begin{proposition}
\label{higman}
Higman's group
\[Q \coloneqq \langle a, b, c, d \mid aba^{-1} = b^2, bcb^{-1} = c^2, cdc^{-1} = d^2, dad^{-1} = a^2 \rangle\]
has no non-trivial finite quotients and $H_2(Q; \Z) = 0$. Moreover, $Q$ has a general type action on a tree, and an infinite-dimensional space of homogeneous quasimorphisms.
\end{proposition}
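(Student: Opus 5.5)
We split the statement into its two halves: the cohomological and finite-quotient properties, which follow from the structure of the presentation, and the geometric properties, which come from an amalgam decomposition.

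\emph{No non-trivial finite quotients.} This is Higman's classical argument. Let $\varphi \colon Q \to F$ be a homomorphism to a finite group. Let $x$ be one of the generators and $y$ the next one cyclically, so that $xyx^{-1} = y^2$. Then $y$ is conjugate to $y^2$, so $\varphi(y)$ and $\varphi(y)^2$ have the same order $n_y$. Since $n_y$ is the order of $\varphi(y)$, this forces $n_y$ to be odd.

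Conjugation by $\varphi(x)$ acts on $\langle \varphi(y)\rangle$ as $y \mapsto y^2$. If $p$ is the smallest prime dividing $n_y$, then $2$ has multiplicative order in $(\Z/p)^\times$ dividing $p-1$. Using the relation on $\langle\varphi(y)\rangle$, one gets that $p$ divides the order of $\varphi(x)$, which is some $n_x$. Moreover the smallest prime of $n_x$ is strictly smaller than $p$, because it divides $p-1$. Going around the $4$-cycle $a \to b \to c \to d \to a$ yields a strictly decreasing cycle of primes, which is a contradiction. Hence all the orders $n_y$ equal $1$, so $\varphi$ is trivial.

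\emph{$H_2(Q;\Z) = 0$.} The presentation is balanced: $4$ generators and $4$ relators. The abelianization is trivial, since each relation becomes $b = 0$, and so on. I would argue that the presentation complex $X$ is aspherical. Then $H_2(Q) = H_2(X)$, and its rank equals $\chi(X) - 1 + \operatorname{rank} H_1(X) = (1 - 4 + 4) - 1 + 0 = 0$. Since $H_2$ of a $2$-complex is free, this gives $H_2(Q;\Z) = 0$.

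Asphericity, together with the tree action, comes from the standard decomposition
\[ Q = A *_{F} B, \]
where $A = \langle a,b,c \mid aba^{-1}=b^2,\ bcb^{-1}=c^2\rangle$ and $B = \langle c,d,a \mid cdc^{-1}=d^2,\ dad^{-1}=a^2\rangle$, amalgamated along the free group $F = \langle a, c\rangle$. The groups $A$ and $B$ are iterated HNN extensions of Baumslag--Solitar type, hence aspherical. The facts that $\langle a, c\rangle$ is free of rank $2$ in each factor, and that the amalgam is non-degenerate ($F \neq A, B$), are Higman's computation via Britton's lemma. A graph of aspherical spaces along $\pi_1$-injective aspherical edge spaces is aspherical, so $X$ is aspherical.

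\emph{Tree action and quasimorphisms.} The Bass--Serre tree $T$ of the amalgam gives an action of $Q$ without global fixed point. It is not elementary: $Q$ has no finite quotients, so it has no index-$2$ subgroup, and $Q$ is not virtually cyclic. To get a general type action, it suffices to rule out a fixed end or an invariant line. For this I would exhibit an edge stabilizer $F$ such that $F$ is not normal in either vertex group, and check that the double cosets $F\backslash A/F$ and $F\backslash B/F$ contain more than two elements. I would then invoke the standard criterion for amalgams: when the edge group has index greater than $2$ on at least one side, the action is of general type, with two independent hyperbolic elements.

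For homogeneous quasimorphisms, I would apply Bestvina--Fujiwara \cite{bestvina:fujiwara}, or the tree version due to Iozzi--Pagliantini--Sisto. This requires the action on $T$ to be general type and to have a WWPD element. The expected main obstacle is exactly this acylindricity-type hypothesis: the edge stabilizers are large free groups.

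The plan is to find a hyperbolic element $g$ and verify that the pointwise stabilizer of a long segment of its axis is trivial. This holds because $F \cap aFa^{-1}$-type intersections are small, as one checks by Britton's lemma in the HNN structure. That makes the action acylindrical along $g$, and then \cite{bestvina:fujiwara} yields an infinite-dimensional space of homogeneous quasimorphisms. Alternatively, one could use Fujiwara's result that amalgams with a double coset condition ($|F\backslash A/F| \geq 3$) have infinite-dimensional $\widetilde{QH}$; this avoids the acylindricity check and is what I would try first.
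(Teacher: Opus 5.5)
Your treatment of the tree action and the quasimorphisms follows the paper: Higman's splitting $Q = A *_F B$, its Bass--Serre tree, and Fujiwara's double coset criterion, which is exactly what the paper cites. Your other route, acylindrical hyperbolicity plus Bestvina--Fujiwara, appears in the paper's footnote. Your acylindricity sketch as written would not work, though: $a \in F$, and even $F \cap bFb^{-1}$ contains $c^2 = bcb^{-1}$.

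For general type you only need the splitting to be non-degenerate with $[A:F] \geq 3$. Non-normality and the double coset count are Fujiwara's hypotheses and are not needed there. All of these conditions, including $|F\backslash A/F| \geq 3$, follow from the quotient $A \twoheadrightarrow \langle a, b \mid aba^{-1} = b^2\rangle \cong \Z[1/2] \rtimes \Z$ that kills $c$. Under it, $F$ maps onto $\langle a\rangle$, which has infinite index. The $\langle a\rangle$-double cosets correspond to orbits of multiplication by $2$ on $\Z[1/2]$, and there are infinitely many of these. For $B$, kill $a$ instead.

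The genuine difference is in $H_2$. The paper runs Mayer--Vietoris on the amalgam and obtains that $Q$ is acyclic, whereas you use the balanced presentation. Your computation $H_2(X)=0$ is correct, and it already suffices without asphericity. A $K(Q,1)$ is obtained from $X$ by attaching cells of dimension at least $3$, so $H_2(X) \to H_2(Q;\Z)$ is surjective for any presentation complex. Hence a perfect group with a deficiency-zero presentation has $H_2 = 0$. Higman's freeness lemma and Whitehead's asphericity theorem are only needed for the stronger acyclicity statement that the paper's route gives.

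One correction to your sketch of Higman's argument: $p$ need not divide $n_x$. A transposition in $S_3$ conjugates a $3$-cycle to its square, with $n_x = 2$ and $p = 3$. What the relation actually gives is $\varphi(y)^{2^{n_x}} = \varphi(y)$, so $p \mid n_y \mid 2^{n_x}-1$ and therefore $\operatorname{ord}_p(2) \mid n_x$. Since $1 < \operatorname{ord}_p(2) \mid p-1$, the order $n_x$ is non-trivial and has a prime factor smaller than $p$, which is what the descent around the $4$-cycle needs. The paper simply cites Higman for this part.
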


\begin{proof}
The fact that $Q$ has no non-trivial finite quotients is proved in Higman's original paper \cite{higman}. There it is also remarked that $Q$ splits as an amalgamated product, which leads to a general type action on a tree \cite{trees}, and to quasimorphisms \cite{fujiwara:tree}\footnote{One could also use that $Q$ is acylindrically hyperbolic \cite[Corollary 4.26]{minasyan:osin} and apply \cite{bestvina:fujiwara}.}. Its homology can be computed by a Mayer--Vietoris argument, which shows that $Q$ is, in fact, acyclic \cite[pages 11-12]{BDH}.
\end{proof}

\subsection{Norms}

As in previous iterated small cancellation constructions \cite{muranov, tarski}, to kill $\scl$ we only use some basic properties of it, so it does not hurt to consider a more general setting.

\begin{definition}
	A function $\nu \colon G \to \R_{\geq 0} \cup \{\infty\}$ is a \emph{norm} if for all $g, h \in G$:
	\begin{itemize}
	\item $\nu(g) = 0$ if and only if $g = 1$;
	\item $\nu(g) = \nu(g^{-1})$;
	\item $\nu(gh) \leq \nu(g) + \nu(h)$.
	\end{itemize}
	Its \emph{stabilisation} $s \nu$ is defined by
	\[s\nu(g) \coloneqq \liminf\limits_{n \to \infty} \frac{\nu(g^n)}{n} \in \R_{\geq 0} \cup \{ \infty \}.\]
\end{definition}

Note that $s\nu(g^n) = |n| s\nu(g)$ for all $g \in G$ and all $n \in \Z$.

\begin{example}
	Every group admits a \emph{trivial} norm
	\[
	\nu_{triv}(g) \coloneqq 
	\begin{cases}
	0 \text{ if } g = 1; \\
	\infty \text{ otherwise}.
	\end{cases}
	\]
\end{example}

To relate norms in different groups, we introduce the following notion.

\begin{definition}
	An \emph{intrinsic norm} is an assignment $G \to \nu_G$ of a norm to every group, such that for every morphism $\varphi \colon G \to H$, and every $g \in G$, it holds $\nu_G(g) \geq \nu_H(\varphi(g))$.
\end{definition}

Intuitively, intrinsic norms are defined purely in terms of group theory, the first example being commutator length\footnote{The zeroth example being the trivial norm.}. More generally, let $w$ be a reduced word in a free group $F_n$. This defines a word map $w \colon G^n \to G$ on any group. We can then define \emph{$w$-length} \cite{words} as the symmetric word length in $G$ with respect to the alphabet $\{ w(g_1, \ldots, g_n) : g_i \in G \}$. This is an intrinsic norm, and its stabilisation is called \emph{stable $w$-length} \cite{stablewlength}. Note that $w$-length is trivial as a norm if and only if $w$ is the empty word.

\begin{definition}
	An intrinsic norm $\nu$ is \emph{non-elementary} if it is non-trivial on some (equivalently, every) non-abelian free group $F$.
\end{definition}

Tautologically, for every non-empty reduced word $w$, the $w$-length is non-elementary.

\begin{lemma}
\label{w:finite}

Let $\nu$ be a non-elementary intrinsic norm. Then there exists a finitely presented $C'(\frac{1}{6})$-small cancellation group $\Gamma$, with no relation a proper power, such that $\nu_{\Gamma}(g) < \infty$ for all $g \in \Gamma$.
\end{lemma}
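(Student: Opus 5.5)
The plan is to build $\Gamma$ from generators whose $\nu$-values are finite by construction, and then to propagate finiteness to all of $\Gamma$ using the triangle inequality. Since $\nu$ is non-elementary, it is non-trivial on the free group $F_2 = F(x,y)$. So there is some $1 \neq w \in F_2$ with $\nu_{F_2}(w) < \infty$. Intrinsicality says that for any group $H$ and any $u, v \in H$, the morphism $F_2 \to H$ with $x \mapsto u$, $y \mapsto v$ gives $\nu_H(w(u,v)) \leq \nu_{F_2}(w) < \infty$. By the triangle inequality and symmetry, the set of elements of finite $\nu_H$-value is a subgroup. Hence it suffices to construct $\Gamma$ so that each generator of $\Gamma$ is equal in $\Gamma$ to a product of values of $w$ (a single value will do).

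We take $\Gamma = \langle a_1, \ldots, a_k \mid r_1, \ldots, r_k \rangle$ with relations of the form
\[ r_i = a_i^{-1} \, w(u_i, v_i), \]
where $u_i, v_i$ are long words in the $a_j$ chosen so that the cyclically reduced forms of the $r_i$ satisfy $C'(\frac{1}{6})$. Then in $\Gamma$ each $a_i = w(u_i,v_i)$ has finite norm, and so $\nu_\Gamma$ is finite everywhere. We could also use several values $w(u_{i,1}, v_{i,1}) \cdots w(u_{i,m}, v_{i,m})$ in a single relation if that helps small cancellation.

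The main obstacle is choosing the substitutions so that the relators satisfy $C'(\frac16)$ and are not proper powers, given that $w$ is an arbitrary fixed nontrivial word. First, we may replace $w$ by a conjugate, using $\nu_{F_2}(gwg^{-1}) \leq 2\nu(g) + \nu(w)$ only when $\nu(g) < \infty$. That is not available, so instead we use intrinsicality with an automorphism or endomorphism of $F_2$. Precomposing by the injective endomorphism $x \mapsto x^{p} y^{q}$, $y \mapsto \ldots$ is fine, since any image of $w$ under an endomorphism $F_2 \to F_2$ still has finite norm. Using this, we would first arrange that $w$ is cyclically reduced and not a proper power; note that $w \neq 1$ is a power of a root $w_0$, but we cannot pass to $w_0$, so we keep $w$. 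Being a proper power is harmless, because the relator $a_i^{-1} w(u_i,v_i)$ is not a proper power anyway: its exponent sum in $a_i$ differs from that of any power once the $u_i,v_i$ avoid $a_i$ or are balanced.

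Then we choose $u_i, v_i$ as distinct "generic" long words, e.g.\ $u_i = a_{i+1} a_{i+2}^{n_{i,1}} a_{i+1} a_{i+2}^{n_{i,2}} \cdots$ with all exponents distinct and large, using indices mod $k$ with $k \geq 3$. With this choice, any piece between two relators (or two placements within one relator) must lie within a bounded number of syllables of the $u$'s and $v$'s. The point is that the distinct exponents make each long subword occur in at most one position. Meanwhile the relator length grows with the number of syllables, giving piece length $<\frac16$ of the relator length. Since $w$ is cyclically reduced in $x,y$ and the $u_i, v_i$ start and end with different letters, no cancellation occurs in $w(u_i,v_i)$. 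We also make $u_i, v_i$ avoid the letter $a_i$, so that the leading $a_i^{-1}$ does not cancel. Verifying the piece bound carefully, including pieces coming from repetitions of $x,y$ inside $w$ (the same $u_i$ appearing several times), is the step needing the most care. Repetitions inside a single relator are allowed pieces only if they are genuinely different positions, so we make each occurrence distinct by using the variant with several independent substitutions, i.e.\ relators $a_i^{-1}\prod_j w(u_{i,j},v_{i,j})$. Alternatively, we first pass to an endomorphic image of $w$ in which $x$ and $y$ each occur only with large distinct exponents.
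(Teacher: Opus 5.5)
Your reduction is sound: the elements of finite $\nu_\Gamma$ form a subgroup, and every value $w(u,v)$ has finite norm by intrinsicality. The gap is in the small cancellation step, which is the real content of the lemma. The relators $r_i=a_i^{-1}w(u_i,v_i)$ do not satisfy $C'(\frac16)$ in general, however generic $u_i,v_i$ are chosen.

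Your claim that every piece stays within a bounded number of syllables is false. The word $u_i$ is substituted at \emph{every} occurrence of $x^{\pm1}$ in $w$, so as soon as $x$ occurs twice, the whole of $u_i$ is a piece. Take commutator length, the case the paper actually needs, with $w=[x,y]$. Then $r_i=a_i^{-1}u_iv_iu_i^{-1}v_i^{-1}$, and $u_i$ (resp.\ $v_i$) is a common prefix of a cyclic conjugate of $r_i$ and a cyclic conjugate of $r_i^{-1}$. Hence some piece has length at least $\frac14(|r_i|-1)\geq\frac16|r_i|$.

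Likewise, being a proper power is not harmless. If $w=w_0^k$ with $k\geq 2$, then $w_0(u_i,v_i)^{k-1}$ occurs at two different positions of $r_i$, giving a piece of roughly half the relator. Your exponent-sum argument only shows that $r_i$ itself is not a proper power. Passing first to an endomorphic image $\alpha(w)$ cannot help either. Indeed $\alpha(w)(u,v)=w(\alpha(x)(u,v),\alpha(y)(u,v))$ is again a single value of $w$, with the same repetitions, and $\alpha(w_0^k)=\alpha(w_0)^k$ is still a proper power.

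The variant $a_i^{-1}\prod_{j=1}^m w(u_{i,j},v_{i,j})$ is the right repair, but not for the reason you give. It does not make the occurrences distinct, since $u_{i,j}$ still repeats inside the $j$-th factor. What it does is confine every piece to essentially one factor. The missing point is therefore quantitative: you need many factors, for instance $m\geq 7$ of comparable length with independent generic substitutions, so that every piece is shorter than $\frac16|r_i|$.

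With that added, your argument is essentially the paper's, which uses $a^{-1}\prod_{i=1}^n\gamma_i g\gamma_i^{-1}$ with long generic conjugators $\gamma_i$. There the repetitions are the fixed short copies of $g$ and the pairs $\gamma_i,\gamma_i^{-1}$; each $\gamma_i$ is a piece shared by the relator and its inverse. Again it is the number $n$ of factors that makes these pieces small relative to the relator.

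Finally, the conjugation invariance you declared ``not available'' follows at once from intrinsicality applied to inner automorphisms. This is what the paper's argument implicitly uses when it says that conjugates of $g$ have finite norm.
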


\begin{proof}
Start with $F_2 = \langle a, b \rangle$. By assumption, there exists an element $g \in F_2$ such that $0 < \nu(g) < \infty$. We add a relation of the form
\[a^{-1} \prod\limits_{i = 1}^{n} \gamma_i g \gamma_i^{-1};\]
and another one of the same form for $b$. Choosing the elements $\gamma_i$ appropriately, we can ensure that these relations satisfy the $C'(\frac{1}{6})$-small cancellation condition and that they are not proper powers. In the quotient $\Gamma$, the images of $a$ and $b$ are products of conjugates of the image of $g$, hence they have finite $\nu_\Gamma$. Since they generate $\Gamma$, we conclude.
\end{proof}

\section{A first construction}
\label{s:norms}

The goal of this section is to present the method in a more essential form, only focusing on quasimorphisms and $\scl$ (\cref{scl}). We start with a lemma which will be used as the inductive step.

\begin{lemma}
\label{indstep}
	Let $G$ be a torsion-free, non-elementary hyperbolic, virtually special group. Let $N < G$ be a non-trivial normal subgroup, let $K < G$ be a finite-index normal subgroup, and let $A \subset N$ be a finite subset. Let $\nu$ be an intrinsic norm such that $\nu_N(a) < \infty$ for all $a \in A$, and let $\delta > 0$.
	
	Then there exists a normal subgroup $M < G$ such that
	\begin{itemize}
	\item $M \subset N \cap K$;
	\item $G/M$ is torsion-free, hyperbolic and virtually special;
	\item For every $a \in A$, it holds $s\nu_{N/M}(a) < \delta$.
	\end{itemize}
\end{lemma}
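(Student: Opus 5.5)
Let $A = \{a_1, \ldots, a_k\}$. The plan is to make each $a_j$ conjugate, inside $N/M$, to a high power of itself times bounded-norm errors. We do this with a single Dehn filling along a malnormal free subgroup, using \cref{random}, \cref{bowditch} and \cref{MSQT}.

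Without loss of generality each $a_j \neq 1$; otherwise $s\nu(a_j)=0$ already. Since $G$ is torsion-free hyperbolic, we may replace each $a_j$ by a primitive root. If $a_j = r_j^{m}$ with $r_j$ primitive, then $s\nu(a_j) = |m|\, s\nu(r_j)$. However $r_j$ need not lie in $N$, so instead we keep $a_j$ and work with powers. The intended relations have the form
\[ a_j^{L} = \prod_{i=1}^{n} h_{j,i}\, x_j\, h_{j,i}^{-1}, \]
where $x_j$ is a fixed element of $N$ of finite $\nu_N$-norm, for instance $a_j$ itself or a product of conjugates of $a_j$. Here $L$ is large and $n$ is bounded independently of $L$. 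In the quotient, $\nu_{N/M}(a_j^{L}) \leq n\,\nu_N(x_j)$. Iterating the relation on powers $a_j^{Lt}$ then gives $s\nu_{N/M}(a_j) \leq n \nu_N(x_j)/L$, which is $< \delta$ for $L$ large. Since $\nu$ is intrinsic, norms only decrease under $N \to N/M$, which justifies the inequality. Some care is needed because $\liminf$ along multiples of $L$ suffices only if the norm is subadditive; it is, so $\nu(a^{q}) \leq \nu(a^{Lt}) + \nu(a^{r})$ with $r<L$ bounded.

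To realise such relations as a Dehn filling, apply \cref{random} with $H = N \cap K$. This subgroup is normal and infinite in a torsion-free non-elementary hyperbolic group, hence non-elementary. We obtain a malnormal quasiconvex free subgroup $P = F(g, h_1, \ldots, h_m) < G$. By \cref{bowditch}, $G$ is hyperbolic relative to $P$. Then \cref{MSQT} gives a finite-index $P' \lhd P$. Inside $P$ we choose words $w_j$ that lie deep in $P'$, say by taking the $h_i$ and their products to high powers. We also take them to lie in $N\cap K$, so that $M := \normal{W} \subset N \cap K$. We arrange that $P/W$ is a $C'(\frac16)$ small cancellation group with no relator a proper power, so that it is hyperbolic and cocompactly cubulated by \cref{sc}, hence virtually special. \cref{MSQT} then gives that $G/M$ is hyperbolic and virtually special. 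It is torsion-free because it is virtually special and (by the cubulation) finite-dimensional, using \cref{ccc:summary} and the torsion-freeness of $P/W$, since torsion in Dehn fillings comes from the parabolics.

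The main obstacle is fitting $a_j$ itself into this picture. The element $a_j$ is generally not in $P$, and the relation must express $a_j^{L}$, not an element of $P$. I would first try to pass $a_j$ into the malnormal subgroup. Using \cref{random} with $g$ a primitive root of $a_j$ gives $P \ni g$, but for several $a_j$ simultaneously, and with $a_j$ a power of a primitive element not in $N\cap K$, this requires a variant of \cref{random} with several prescribed elements. Alternatively, one can treat the $a_j$ one at a time, iterating the lemma $k$ times: each quotient stays torsion-free, hyperbolic and virtually special, and the image of $N$ stays non-trivial.

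Two further points need checking. First, one must ensure that the relators lie in $P'$ and in $K$ while still expressing $g^{L}$ as a bounded product of conjugates of an element of finite $\nu$-norm. This is done by choosing $L$ divisible by the indices $[P:P']$ and $[G:K]$ and working with high powers of the $h_i$. Second, one must check that the images of the conjugating elements still allow the small cancellation condition. This is where the freedom to take $L$ and the exponents large is used.
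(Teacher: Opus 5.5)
Your eventual strategy is the paper's: treat one element at a time, put a primitive root $g$ of $a$ into a malnormal quasiconvex free subgroup $P=\langle g,h_1,\dots\rangle$ via \cref{random}, and fill along a single $C'(\frac16)$ relator that expresses a large power of $a$ as a bounded product of conjugates, using \cref{bowditch} and \cref{MSQT}. Your remark that the root need not lie in $N$, so that one should keep working with powers of $a$, is a point the paper passes over quickly. However, two steps fail as written.

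First, your relator need not lie in $P'\cap K$. Taking $L$ divisible by $[P:P']$ and $[G:K]$ only puts $a^L$ there. Since $P'\cap K$ is normal in $P$, a conjugate $hxh^{-1}$ with $h\in P$ lies in it if and only if $x$ does, and this fails in general for $x=a$. Making the conjugators high powers merely reduces the product to $x^n$ modulo $P'\cap K$, which you have not arranged to be trivial. The fix is the paper's: choose $m$ with $a^m\in K\cap P'$ and use $w=a^{-mq}\prod_{i=1}^p\gamma_i a^m\gamma_i^{-1}$. Then $s\nu_{N/M}(a)\le \frac{pm\,\nu_N(a)}{mq}=\frac pq\,\nu_N(a)$, which is less than $\delta$ once $q>p\,\nu_N(a)/\delta$. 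Also, for $C'(\frac16)$ it is not the size of the exponent that helps. The self-overlap of $a^{-mq}$ is a piece of length about $|a^{mq}|$, and each $\gamma_i$ is a piece because it occurs in both $w$ and $w^{-1}$. So you need $p$ bounded below (the paper takes $p>6$) and the $\gamma_i$ chosen long only after $m$ and $q$ are fixed.

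Second, the torsion-freeness argument does not work. \cref{MSQT} only gives that $G/M$ is \emph{virtually} special, and virtually special groups can have torsion (e.g.\ $F_2\times\Z/2$). \cref{ccc:summary} yields torsion-freeness only for cocompactly cubulated groups, and nothing you cite gives $G/M$ finite cohomological dimension. Torsion-freeness of $P/W$ is necessary but not sufficient by itself. The principle that torsion in a Dehn filling comes from the filled peripheral subgroup is a nontrivial statement about sufficiently deep fillings and must be invoked explicitly, for instance via the cohomological dimension bounds for deep fillings in \cite{PS}. The paper instead chooses $p$ large so that $\langle w\rangle$ satisfies a tight geometric small cancellation condition in $G$ \cite[Proposition 7.1]{tarski}, and then applies \cite[Proposition 5.18]{ppq}. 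Finally, your subadditivity caveat is unnecessary: a $\liminf$ is always bounded by the $\liminf$ along any subsequence.
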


\begin{proof}
By induction, it suffices to prove this in the case when $A = \{a\}$ consists of a single element. Moreover, up to replacing $a$ by a root, we may assume that $a$ is primitive. Because $N$ is non-elementary, \cref{random} gives elements $x, y \in N$ such that $P \coloneqq \langle a, x, y \rangle < N$ is a free group of rank $3$ that is quasiconvex and malnormal in $G$. By \cref{bowditch}, $G$ is hyperbolic relative to $P$. Let $P' < P$ be the finite-index normal subgroup given by \cref{MSQT}.

Let $m \in \N$ be sufficiently large so that $a^m \in K \cap P'$. Choose parameters
\[p > 6; \quad q > \frac{p \cdot \nu_N(a)}{\delta}.\]
If $\gamma_1, \ldots, \gamma_p \in \langle x, y\rangle$ are elements defined by small cancellation words in $\{ x, y \}$ that are sufficiently long compared to $m$ and $q$, the word
\[w \coloneqq a^{-mq} \prod\limits_{i = 1}^p \gamma_i a^m \gamma_i^{-1}\]
is a $C'(\frac{1}{6})$-small cancellation word in $P$, moreover it is not a proper power. Let $W \coloneqq \normal{w}_P$ and $M \coloneqq \normal{w}_G = \normal{W}_G$.

Because $K \cap P'$ is normal in $P$ and $a^m \in K \cap P'$, also $w \in K \cap P' < N$ and so $M \subset N \cap K$. By \cref{sc}, the group $P/W$ is hyperbolic and virtually special, so \cref{MSQT} applies and shows that $G/M$ is hyperbolic and virtually special.
 
To see that $G/M$ is torsion-free, we could express this process in terms of cubical small cancellation theory and apply \cite{macarena:asphericity}. For ease of reference, we instead use geometric small cancellation theory; this is slightly outside the scope of the preliminaries but it is very standard so only give a minimal argument citing the literature. By choosing $p$ to be sufficiently large in terms of the embedding $P < G$, we may ensure that the group $\langle w \rangle$ satisfies a suitable ``tight'' \cite[Definition 2.15]{tarski} geometric small cancellation condition in $G$, see \cite[Proposition 7.1 and Remark 7.2]{tarski}. Since $G$ is torsion-free and $w$ is ``tight'', the quotient $G/M$ is torsion-free \cite[Proposition 5.18]{ppq}.

It remains to show that $s\nu_{N/M}(a) < \delta$. Indeed, the added relation implies that, in $N/M$, the element $a^{mq}$ is a product of $p$ conjugates of $a^m$, hence
\begin{align*}
&s\nu_{N/M}(a^{mq}) \leq \nu_{N/M}(a^{mq}) \leq p \cdot \nu_{N/M}(a^m) \leq pm \cdot \nu_N(a)\\
\Rightarrow \quad &s\nu_{N/M}(a) \leq \frac{p}{q} \nu_N(a) < \delta,
\end{align*}
where we used additivity, homogeneity, conjugacy-invariance and monotonicity under quotients.
\end{proof}

\begin{theorem}
\label{norms}

Let $\nu$ be a non-elementary intrinsic norm (such as commutator length, or more generally $w$-length for a non-empty reduced word $w$), and let $s \nu$ be its stabilisation. Then there exists a torsion-free finitely generated residually finite group $G_\infty$, with a finitely generated normal subgroup $N_\infty$, such that:
\begin{itemize}
\item $N_\infty \to G_\infty$ induces an isomorphism of profinite completions;
\item $s\nu_{N_\infty}(g) = 0$ for all $g \in N_\infty$;
\item $G_\infty$ has an infinite-dimensional space of homogeneous quasimorphisms.
\end{itemize}
\end{theorem}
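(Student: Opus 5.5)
We follow the sketch from the introduction: an inductive sequence of Dehn fillings starting from a Rips construction over Higman's group $Q$ of \cref{higman}. Let $\Gamma$ be the small cancellation group from \cref{w:finite}. It is cocompactly cubulated by \cref{sc}, and $\nu_\Gamma(g)<\infty$ for all $g\in\Gamma$. Applying \cref{macarena} to $Q$ and $\Gamma$ gives $1\to N_0\to G_0\to Q\to 1$ with $G_0$ hyperbolic and cocompactly cubulated. Hence $G_0$ is torsion-free and virtually special (\cref{ccc:summary}), and $N_0$ is a quotient of $\Gamma$. By intrinsicness $\nu_{N_0}(g)\le \nu_\Gamma(\tilde g)<\infty$ for all $g\in N_0$, and $N_0$ is finitely generated. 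We may take $G_0$ non-elementary, since it surjects onto the non-amenable group $Q$. The subgroup $N_0$ is non-trivial, since otherwise $G_0\cong Q$ would be residually finite with no finite quotients.

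Enumerate $N_0=\{g_1,g_2,\dots\}$ and enumerate the finite-index normal subgroups $K_1,K_2,\dots$ of $G_0$; there are countably many because $G_0$ is finitely generated. We build normal subgroups $M_0=1\subset M_1\subset M_2\subset\cdots$ of $G_0$, all contained in $N_0$, such that $G_i:=G_0/M_i$ is torsion-free, hyperbolic and virtually special. We apply \cref{indstep} to $G_i$ with the following data:
\begin{itemize}
\item $N=N_i:=N_0/M_i$;
\item $K=$ the image of $K_1\cap\dots\cap K_i\cap \widetilde K_i$ in $G_i$;
\item $A=$ the images of $g_1,\dots,g_i$;
\item $\delta=1/i$.
\end{itemize}
Here $\widetilde K_i$ is the preimage of finite-index normal subgroups of $G_j$ that we use to protect residual finiteness, as explained below. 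The hypotheses hold: $\nu_{N_i}(a)\le\nu_{N_0}(a)<\infty$ by intrinsicness. Non-elementarity persists because $G_i\to Q$ is still onto, and $N_i$ is non-trivial by the same residual-finiteness argument as for $N_0$. Since $M_{i+1}\subset N_0$, each $G_i$ still fits into $1\to N_i\to G_i\to Q\to 1$.

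Set $M_\infty=\bigcup M_i$, $G_\infty=G_0/M_\infty$ and $N_\infty=N_0/M_\infty$. Then $1\to N_\infty\to G_\infty\to Q\to1$ is exact. By \cref{bridson} and \cref{higman}, $N_\infty\to G_\infty$ induces an isomorphism of profinite completions. Homogeneous quasimorphisms of $Q$ pull back to $G_\infty$, which gives an infinite-dimensional space. Torsion-freeness passes to the direct limit because every finite-order element already appears in some $G_i$. For vanishing norms, fix $g\in N_\infty$. It lifts to some $g_k$, and for every $i\ge k$ intrinsicness gives $s\nu_{N_\infty}(g)\le s\nu_{N_{i+1}}(g)<1/i$, so $s\nu_{N_\infty}(g)=0$.

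The main obstacle is residual finiteness of $G_\infty$, which is why the subgroups $K$ appear. Each $G_i$ is residually finite. At stage $i$ we have the finitely many non-trivial elements of the ball $B_i$ of radius $i$ in $G_i$. For each such element we pick a finite-index normal subgroup of $G_i$ missing it; this is possible by \cref{ccc:summary}. We then require all later $M_j$ to lie in the preimage in $G_0$ of the intersection $L_i$ of these subgroups. This is the role of $\widetilde K_i$, and \cref{indstep} allows it because $M\subset K$. Because $M_\infty\subset$ the preimage of $L_i$, the quotient $G_\infty\to G_i/L_i$ is well defined. Now let $h\in G_\infty$ be non-trivial. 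Its lifts to $G_i$ are non-trivial, and for large $i$ some lift lies in $B_i$, so it survives in $G_i/L_i$. Hence $h$ survives in a finite quotient of $G_\infty$. With this bookkeeping the remaining steps are routine. Intersecting with the $K_j$ is not strictly needed for the statement; only the $\widetilde K_i$ are.
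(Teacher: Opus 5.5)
Your proposal is correct and follows the paper's proof essentially step for step: Arenas' Rips construction over Higman's group with kernel a quotient of $\Gamma$, then iterated applications of \cref{indstep}, with the finite-index subgroup $K$ chosen so that every later kernel lies in a finite-index normal subgroup protecting the ball of radius $i$, which is what gives residual finiteness of $G_\infty$. The differences are only bookkeeping (you enumerate $N_0$ instead of taking $A$ to be a ball, and you track kernels as preimages in $G_0$ instead of via the chain $K_i < \pi_{i-1}(K_{i-1})$), and you make explicit the checks that each $N_i$ is non-trivial and each $G_i$ is non-elementary, which the paper leaves implicit.
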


\begin{proof}
Let $\Gamma$ be given by \cref{w:finite}; it is hyperbolic and cocompactly cubulated (\cref{sc}). Let $Q$ be Higman's group (\cref{higman}). By \cref{macarena}, there exists a short exact sequence
\[1 \to N_0 \to G_0 \to Q \to 1,\]
where $G_0$ is hyperbolic and cocompactly cubulated, hence torsion-free, virtually special, and residually finite (\cref{ccc:summary}); and $N_0$ is a quotient of $\Gamma$. It follows from the choice of $\Gamma$ that on $N_0$, and every quotient thereof, every non-trivial element has finite $\nu$. We fix a finite generating set $S$ for $G_0$, and henceforth the length of an element in a quotient of $G_0$ will always be intended in the Cayley graph with generating set the image of $S$.

We construct by induction short exact sequences
\[\begin{tikzcd}
	& \vdots & \vdots & \vdots \\
	1 & {N_i} & {G_i} & Q & 1 \\
	1 & {N_{i+1}} & {G_{i+1}} & Q & 1 \\
	& \vdots & \vdots & \vdots \\
	1 & {N_\infty} & {G_\infty} & Q & 1
	\arrow[two heads, from=1-2, to=2-2]
	\arrow[two heads, from=1-3, to=2-3]
	\arrow[no head, from=1-4, to=2-4]
	\arrow[shift left, no head, from=1-4, to=2-4]
	\arrow[from=2-1, to=2-2]
	\arrow[from=2-2, to=2-3]
	\arrow[two heads, from=2-2, to=3-2]
	\arrow[from=2-3, to=2-4]
	\arrow["{\pi_i}", two heads, from=2-3, to=3-3]
	\arrow[from=2-4, to=2-5]
	\arrow[shift left, no head, from=2-4, to=3-4]
	\arrow[no head, from=2-4, to=3-4]
	\arrow[from=3-1, to=3-2]
	\arrow[from=3-2, to=3-3]
	\arrow[two heads, from=3-2, to=4-2]
	\arrow[from=3-3, to=3-4]
	\arrow[two heads, from=3-3, to=4-3]
	\arrow[from=3-4, to=3-5]
	\arrow[shift left, no head, from=3-4, to=4-4]
	\arrow[no head, from=3-4, to=4-4]
	\arrow[two heads, from=4-2, to=5-2]
	\arrow[two heads, from=4-3, to=5-3]
	\arrow[shift left, no head, from=4-4, to=5-4]
	\arrow[no head, from=4-4, to=5-4]
	\arrow[from=5-1, to=5-2]
	\arrow[from=5-2, to=5-3]
	\arrow[from=5-3, to=5-4]
	\arrow[from=5-4, to=5-5]
\end{tikzcd}\]
and finite-index normal subgroups $K_i < G_i$ with the following properties:
\begin{enumerate}
\item\label{list:hvs} Each $G_i$ is torsion-free, hyperbolic and virtually special (hence residually finite);
\item\label{list:scl} For every element $a \in N_i$ of length at most $i$, it holds $s\nu_{N_i}(a) < \frac{1}{i}$;
\item\label{list:K} $\ker(\pi_{i-1}) < N_{i-1} \cap K_{i-1}$ and $K_i < \pi_{i-1}(K_{i-1})$;
\item\label{list:rf} The ball of radius $i$ in $G_i$ maps injectively into $G_i/K_i$.
\end{enumerate}
Let $i \geq 0$, and suppose by induction that $G_j, N_j, K_j$ have been constructed for $j < i$, and $G_i, N_i$ have been constructed, such that points \eqref{list:hvs}-\eqref{list:rf} are satisfied. By residual finiteness, there exists a finite-index normal subgroup $K_i < G_i$ such that the ball of radius $i$ maps injectively into $G_i/K_i$, giving \eqref{list:rf}; moreover, up to taking a deeper subgroup, we may assume that $K_i < \pi_{i-1}(K_{i-1})$, giving the second statement of \eqref{list:K}. We apply \cref{indstep} with $G_i, N_i, K_i, \delta = \frac{1}{i}$ and the finite set $A$ being the ball of radius $i$ in $G_i$. This gives a normal subgroup $M_i < G_i$, we let $G_{i+1} \coloneqq G_i/M_i$ and let $\pi_i$ be the quotient map. Then \cref{indstep} gives directly points \eqref{list:hvs} and \eqref{list:scl}, and the first statement of \eqref{list:K}.

We claim that $N_\infty$ and $G_\infty$ have the properties in the statement of \cref{norms}. Being quotients of $G_0$ and $N_0$ respectively, $G_\infty$ and $N_\infty$ are finitely generated. $G_\infty$ is torsion-free because every $G_i$ is. To see that $G_\infty$ is residually finite, let $1 \neq g \in G_\infty$. Let $i$ be the length of $g$, so we may choose a lift to $G_i$, which we also denote by $g$, which again has length $i$. By point \eqref{list:rf}, $g$ maps to a non-trivial element of $G_i/K_i$. Moreover, point \eqref{list:K} and induction imply that $G_i \to G_i/K_i$ factors through every $G_j : j \geq i$, hence through $G = G_\infty$. This gives a homomorphism of $G$ to a finite group $G_i / K_i$ such that $g$ maps to a non-trivial element. The first and third item of the theorem follow from the choice of $Q$, by \cref{bridson} and \cref{higman} (in fact, all quasimorphisms on $G_\infty$ factor through $Q$). The second item follows from point \eqref{list:scl}, together with the monotonicity of $\nu$ under quotients.
\end{proof}

Taking $\nu$ to be commutator length, and using Bavard duality \cite{bavard}, we obtain half of our main thoerem.

\begin{corollary}
\label{scl}

There exists a torsion-free finitely generated residually finite group $G_\infty$, with a finitely generated normal subgroup $N_\infty$, such that:
\begin{itemize}
\item $N_\infty \to G_\infty$ induces an isomorphism of profinite completions;
\item $\scl_{N_\infty}(g) = 0$ for all $g \in N_\infty$, hence every quasimorphism on $N_\infty$ is bounded;
\item $G_\infty$ has an infinite-dimensional space of homogeneous quasimorphisms, hence $\scl_{G_\infty}$ is unbounded.
\end{itemize}
\end{corollary}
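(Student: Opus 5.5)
The plan is to specialise \cref{norms} to $\nu = \cl$ and then turn statements about stable lengths into statements about quasimorphisms via Bavard duality.

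First, commutator length is an intrinsic norm. The triangle inequality and symmetry are immediate from the definition, and we take the convention $\cl(g) = \infty$ for $g \notin [G,G]$. The positivity axiom fails literally, since a non-trivial commutator has $\cl = 1$ but is not $1$; however, only subadditivity, symmetry, conjugation invariance and monotonicity are used in \cref{indstep} and \cref{norms}, so this is harmless. Monotonicity holds because a homomorphism sends products of commutators to products of commutators. Commutator length is non-elementary, since $[a,b] \in F_2$ has $\cl = 1$. Its stabilisation is $\scl$.

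Applying \cref{norms}, we obtain a torsion-free, finitely generated, residually finite $G_\infty$ with a finitely generated normal subgroup $N_\infty$ such that $N_\infty \to G_\infty$ is a profinite isomorphism, $\scl_{N_\infty} \equiv 0$, and $G_\infty$ has an infinite-dimensional space of homogeneous quasimorphisms. Here we note that $\scl_{N_\infty}(g) = 0$ for all $g$ forces every element to have finite $\cl$, so $N_\infty$ is perfect. This also comes from the construction, because $N_0$ is a quotient of $\Gamma$, on which $\cl$ is finite everywhere by \cref{w:finite}.

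For the second bullet, take a homogeneous quasimorphism $\phi$ on $N_\infty$. Bavard duality gives $|\phi(g)| \leq 2 D(\phi)\,\scl(g) = 0$ for $g \in [N_\infty, N_\infty] = N_\infty$, so $\phi = 0$. Every quasimorphism is at bounded distance from its homogenisation, hence every quasimorphism on $N_\infty$ is bounded.

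For the third bullet, choose a homogeneous quasimorphism $\phi$ on $G_\infty$ that is not a homomorphism. This is possible because $H^1(G_\infty;\R)$ is finite-dimensional ($G_\infty$ is finitely generated), while the space of homogeneous quasimorphisms is infinite-dimensional. By the special case of Bavard duality recalled in the introduction, some $g$ then has $\scl_{G_\infty}(g) \notin \{0,\infty\}$, so $0 < \scl(g) < \infty$. Since $\scl(g^n) = n\,\scl(g)$, $\scl_{G_\infty}$ is unbounded on the set where it is finite.

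The main (minor) obstacle is the formal point about the norm axioms. Beyond that, the work reduces to bookkeeping of which properties of $\nu$ the inductive construction actually uses.
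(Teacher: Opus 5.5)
Your proposal is correct and follows the paper's own argument: specialise \cref{norms} to $\nu = \cl$, then convert the conclusions about $\scl$ into statements about quasimorphisms via Bavard duality. Two small slips, neither affecting validity: first, $\cl$ does satisfy the positivity axiom ($\cl(g)=0$ exactly when $g$ is the empty product, i.e.\ $g=1$, and a non-trivial commutator has $\cl = 1 \neq 0$), so there is no formal obstacle to address; second, $\scl \equiv 0$ on its own only makes the abelianisation of $N_\infty$ torsion rather than trivial, so perfectness genuinely comes from $\Gamma$ being perfect, as you also note.
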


\begin{remark}
\label{stablewlength}

Let $\nu$ be $w$-length, for $w$ a non-trivial reduced word in the commutator subgroup of a free group. Then similarly \cref{norms} implies that stable $w$-length is not a profinite invariant: indeed we only need the easy direction of Bavard duality \cite{bavard}, to deduce that a group with a non-zero homogeneous quasimorphism has unbounded stable $w$-length. If instead $w$ does not belong to the commutator subgroup, then stable $w$-length vanishes on every group \cite[Lemma 2.13]{stablewlength}.
\end{remark}

\section{Free subgroups and fixed point properties}
\label{s:NL}

In this section we refine the construction of \cref{norms} to strengthen the rigidity imposed on the subgroup $N_\infty$. Our main goal is a purely algebraic statement.

\begin{theorem}
\label{algebraic:NL}

There exists a torsion-free finitely generated residually finite group $G_\infty$, with a finitely generated normal subgroup $N_\infty$, such that:
\begin{itemize}
\item $N_\infty \to G_\infty$ induces an isomorphism of profinite completions;
\item $N_\infty$ is perfect, has vanishing $\scl$, no non-abelian free subgroups, and it does not virtually map onto $\Z$;
\item $G_\infty$ surjects onto Higman's group.
\end{itemize}
\end{theorem}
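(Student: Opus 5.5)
We run the same iterated Dehn filling as in the proof of \cref{norms}, with extra relations at each stage to force the new properties of $N_\infty$. As before, we start from \cref{macarena} with $Q$ Higman's group and $\Gamma$ from \cref{w:finite} with $\nu = \cl$. Then $N_0$, and every quotient of it, has all elements of finite commutator length, so is perfect: an element of finite $\cl$ lies in the commutator subgroup. This gives perfectness of $N_\infty$ at no cost. Surjectivity onto Higman's group is automatic, and profinite isomorphism follows from \cref{bridson} and \cref{higman}, exactly as before. Residual finiteness and torsion-freeness of $G_\infty$ come from the bookkeeping with the $K_i$ as in \cref{norms}. Vanishing of $\scl$ is handled by \cref{indstep}, applied at each stage to the ball of radius $i$.

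\textbf{Killing free subgroups.} At stage $i$ we enumerate the pairs $(u,v)$ of elements of $N_i$ of length at most $i$, and we need a relation in $G_{i+1}$ forcing $\langle u,v\rangle$ not to be free of rank $2$. For each such pair in which $u$ and $v$ do not commute (otherwise nothing is needed), we use a variant of \cref{indstep}. Pass to a primitive root, and use \cref{random} with $H = N_i$ to find a malnormal quasiconvex free subgroup $P$ containing suitable elements. We then add a small cancellation relation $r(u,v)$, a nontrivial word in $u,v$ whose exponents lie in $P'\cap K_i$, chosen via long words in auxiliary generators $x,y$. For instance, we require some power $u^m$ to commute with a power $(v^m)^{\gamma}$, or an equation of Tarski-monster type making the image of $\langle u,v\rangle$ satisfy a nontrivial relation.

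The technical point is that $u$ and $v$ themselves need not lie in a common malnormal free subgroup. The cleanest fix is to choose $P = \langle u', x, y\rangle$ where $u'$ is a primitive element built from $u^m$ and $v^m$ (e.g.\ $u' = u^m v^m$ after checking primitivity), and to add relations $w$ in $P$ of the form used in \cref{indstep}. These make $u^m v^m$ a product of conjugates that is simultaneously expressible in two incompatible ways, producing a nontrivial relation among $u^m$ and $v^m$. The MSQT (\cref{MSQT}) and the torsion-freeness argument via tight small cancellation carry over verbatim. In the limit, every pair in $N_\infty$ lifts to some stage where it became non-free, so $N_\infty$ has no non-abelian free subgroups. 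Because pairs of small length persist, and relations only accumulate, non-freeness is preserved under further quotients.

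\textbf{No virtual maps onto $\Z$.} A finite-index subgroup $L$ of $N_\infty$ has finite index image in the profinite completion, so it corresponds to a finite-index $L'\le G_\infty$ with $L = L'\cap N_\infty$, since the closures agree. A surjection $L\to\Z$ would yield a nonzero homogeneous quasimorphism on $L$. Instead, we argue that $L$ inherits vanishing $\scl$: every element of $L$ has a power lying in $[N_\infty,N_\infty]$, and we arrange at each stage, via \cref{indstep} applied to $K_i \cap N_i$, that $s\cl_{K_i\cap N_i}(a)<1/i$ on balls. This ensures $\scl_L \equiv 0$ for all finite-index $L$, which precludes $L\to\Z$, because a homomorphism to $\R$ is a homogeneous quasimorphism and is therefore detected by $\scl$ via Bavard duality. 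To do this we need each finite-index subgroup of $N_\infty$ to contain some $K_i\cap N_i$ image. This follows because the $K_i$ are cofinal: $G_\infty$ is residually finite and $\widehat{N_\infty} = \widehat{G_\infty}$. It also requires finite $\cl$ within $K_i\cap N_i$, which we get by the same Lemma \ref{w:finite}-type relations added inside $K_i\cap N_i$.

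\textbf{Main obstacle.} We expect the hardest step to be the free-subgroup killing: producing, for an arbitrary non-commuting pair $(u,v)$, a filling relation inside a malnormal quasiconvex free subgroup of $N_i$ that genuinely imposes a nontrivial relation on $\langle u,v\rangle$, while keeping the quotient hyperbolic, virtually special, and torsion-free.
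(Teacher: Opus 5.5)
Your treatment of perfectness, the profinite isomorphism, residual finiteness, torsion-freeness and vanishing $\scl$ matches the paper. The free-subgroup step, however, has a genuine gap. You take $P=\langle u',x,y\rangle$ with $x,y$ supplied by \cref{random} for $H=N_i$, and add $w=u'^{-mq}\prod_j\gamma_j u'^m\gamma_j^{-1}$ with $\gamma_j\in\langle x,y\rangle$. The conjugators lie outside $\langle u,v\rangle$, so in general $w\notin\langle u,v\rangle$, and nothing forces $\langle u,v\rangle$ to acquire a relation. Your claim that $u^mv^m$ becomes ``expressible in two incompatible ways'' is unsupported. The commutation alternative is not of the allowed type either: for instance $\langle x,y\mid[x^m,y^m]\rangle$ contains $\Z^2$, so $P/W$ would not be hyperbolic and \cref{MSQT} cannot be applied.

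The missing observation is that you never need $u,v\in P$, only $P\leq\langle u,v\rangle$. If $u,v$ freely generate $H=\langle u,v\rangle$, then $H$ is non-elementary, so \cref{random} applied to this $H$ gives $x,y\in H$ with $P=\langle x,y\rangle$ free, malnormal and quasiconvex in $G$. Choose $m$ with $x^m,y^m\in K\cap P'$, and let $w\in\langle x^m,y^m\rangle$ be any $C'(\frac16)$ word that is not a proper power. Then $w$ is a nontrivial element of $H$ killed in $G/M$. So $F_2\cong H\to G/M$ is not injective, and by Hopficity the image is not free on $u,v$. This is \cref{indstep:NL}, and with it your ``main obstacle'' disappears.

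The virtual-$\Z$ step also does not work as written. First, residual finiteness of $G_\infty$ together with $\widehat{N_\infty}\cong\widehat{G_\infty}$ does not make your particular chain $(K_i)$ cofinal. For example, $2^i\Z$ has trivial intersection in $\Z$ but never enters $3\Z$. You would have to build cofinality in, e.g.\ by taking $K_i$ inside every subgroup of index at most $i$. Moreover, by the choice of $K_i$ (the ball of radius $i$ injects into $G_i/K_i$), the ball of radius $i$ in $K_i\cap N_i$ is trivial, so your radii would need adjusting.

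Second, applying \cref{indstep} inside $K_i\cap N_i$ requires $\cl_{K_i\cap N_i}$ to be finite on the relevant elements. That means they, or powers of them, lie in $[K_i\cap N_i,K_i\cap N_i]$, which is essentially the statement you are trying to prove. The ``\cref{w:finite}-type relations'' that would achieve this by Dehn filling are exactly the relators $a^{-mq}\prod_{j=1}^p\gamma_j a^m\gamma_j^{-1}$ with $p\neq q$ of \cref{indstep:FW}. Once you have those, the $\scl$ bound is superfluous: finite abelianisation of the finite-index subgroups already rules out maps onto $\Z$.

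The paper avoids cofinality altogether. It applies \cref{indstep:FW} to $L_i$, the intersection of all index-$i$ subgroups of $N_i$, which has finite index and is therefore finitely generated and non-elementary. The preimage in $N_i$ of any index-$i$ subgroup $L\leq N_\infty$ contains $L_i$. Hence a surjection $L\to\Z$ pulls back to a nonzero map $L_i\to\Z$, contradicting finite abelianisation.
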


\subsection{Fixed point properties}
\label{s:fixedpoints}

Following \cite{NL}, we say that a group has \emph{property NL} (short for \emph{No Loxodromics}) if it cannot act on a hyperbolic space with a loxodromic element. This is the strongest form of rigidity for actions on hyperbolic spaces, indeed every countably infinite group can act elliptically on a point, or parabolically on an unbounded hyperbolic space (e.g. a combinatorial horoball on a Cayley graph in the finitely generated case \cite{groves:manning}, or a tree in the infinitely generated case \cite[Th{\'e}or{\`e}me 15]{trees}). Examples of groups with property NL include high rank lattices \cite{NL:lattices1, NL:lattices2}, Thompson-like groups \cite{NL:V, NL}, and some torsion-free Tarski monsters \cite{tarski}.

We also consider \emph{property FW$_\infty$}: the fixed point property for actions on finite-dimensional CAT(0) cube complexes, of any dimension. This was introduced by Barnhill and Chatterji in \cite{guido} with the name ``property FW'', however since then the name has been used to encompass also actions on infinite-dimensional CAT(0) cube complexes \cite{cornulier}, so we use the convention of \cite{NL:V}. Also this property is known for high rank lattices \cite{CFI, elia}, and many groups with property NL \cite[Section 6]{NL}.

Before proving \cref{algebraic:NL}, let us see how it is sufficient to deduce these properties and thus the main theorem.

\begin{proof}[Proof of the Main Theorem]
We claim that the groups $G_\infty$ and $N_\infty$ from \cref{algebraic:NL} have the properties of the main theorem. $G_\infty$ has an infinite-dimensional space of homogeneous quasimorphisms, and a general type action on a tree, both factoring through $Q$ (\cref{higman}). It follows from Bavard duality \cite{bavard} that it has unbounded $\scl$.

$N_\infty$ has vanishing $\scl$, hence no unbounded quasimorphisms by Bavard duality \cite{bavard}. We are also assuming that it is perfect and has no non-abelian free subgroups. It remains to show that it has property NL and property FW$_\infty$.

The argument for property NL is well-known, and we refer the reader to \cite{NL} for more details on each point. The absence of non-abelian free subgroups prevents general type actions, via ping-pong. The absence of unbounded quasimorphisms prevents focal and oriented lineal actions, via the Busemann pseudocharacter \cite[Section 4.1]{manning}. The absence of index-$2$ subgroups prevents non-oriented lineal actions, via the permutation of the two ends. Hence by the classification of actions on hyperbolic spaces \cite[Section 8.2]{gromov}, every action is either elliptic or horocyclic, so it has no loxodromics.

For property FW$_\infty$ we use a criterion of Genevois \cite[Theorem 5.1]{NL:V}: A group has property FW$_\infty$, provided that no finite index subgroup has a general type action on a hyperbolic space, or maps onto $\Z$. The first condition is ensured by the absence of non-abelian free subgroups as we saw above, and the second condition is assumed.
\end{proof}

\subsection{The inductive step}

The following is an analogue of \cref{indstep} that eliminates free subgroups.

\begin{lemma}
\label{indstep:NL}
	Let $G$ be a torsion-free, non-elementary hyperbolic, virtually special group. Let $N < G$ be a non-trivial normal subgroup, let $K < G$ be a finite-index normal subgroup, and let $A \subset N$ be a finite subset.
	
	Then there exists a normal subgroup $M < G$ such that
	\begin{itemize}
	\item $M \subset N \cap K$;
	\item $G/M$ is torsion-free, hyperbolic and virtually special;
	\item No pair of elements $a, b \in A$ freely generates a free subgroup of $G/M$.
	\end{itemize}
\end{lemma}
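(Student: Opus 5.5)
Mirroring the proof of \cref{indstep}, I would kill one pair at a time. By induction on the number of pairs it suffices to treat a single pair $a, b \in A$. The point is that the conclusion is stable under further quotients: if $a, b$ satisfy a non-trivial relation in $G/M$, they satisfy it in every quotient of $G/M$. The hypotheses also persist, since each quotient is again torsion-free, hyperbolic and virtually special. The subsequent steps take deeper finite-index normal subgroups, namely the images of $K$ intersected with the relevant subgroups, and the successive kernels compose inside $N \cap K$. So fix a pair $a, b$. If $\langle a, b \rangle$ is already not free of rank $2$ in $G$, take $M = 1$. Otherwise $a, b$ freely generate a free subgroup, so in particular $a \neq 1$. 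Replacing $a$ by a primitive root $a_0$ would lose control of $b$, so I would instead work with a primitive root $a_0$ of $a$, which lies in $N$ up to a finite-index issue. To avoid this, I note that it suffices to force a relation between $a^m$ and $b^m$ for some $m$: if $a^m, b^m$ satisfy a non-trivial relation, then $\langle a, b\rangle$ is not free on $a, b$.

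\textbf{Construction.} Since $N$ is non-elementary, \cref{random} applied to a primitive root $a_0$ of $a$ gives $x, y \in N$ with $P = \langle a_0, x, y\rangle$ free of rank $3$, malnormal and quasiconvex in $G$. By \cref{bowditch}, $G$ is hyperbolic relative to $P$, and \cref{MSQT} gives $P'$. Choose $m$ with $a_0^m \in K \cap P'$. The new difficulty is that $b$ need not lie in $P$, and we cannot put $b$ into the peripheral subgroup without losing control. So I would choose the relator to involve only $a_0$ and elements of $\langle x,y\rangle$, but arranged to force a relation between $a$ and $b$ in the quotient. The cleanest way is to make $a$ commute with a conjugate of itself by a word that becomes equal to $b$-related data. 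This seems hard. A better plan is to put $b$ into $P$ from the start. If $a_0, b$ generate a free group, apply a variant of \cref{random}: starting from the free subgroup $\langle a, b\rangle$, pass to a malnormal quasiconvex free subgroup $P=\langle a^k, b^k\text{-type element}, x, y\rangle$. Concretely, I would take $P = \langle a_0', b', x, y \rangle$ where $a_0', b'$ are primitive roots of suitable powers, and the extension of \cref{random} to two prescribed elements generating a free quasiconvex subgroup is the step I expect to be the main obstacle. The likely fix is to replace $(a,b)$ by $(a, gbg^{-1})$-type elements, or to use $u = a^m$ and $v = b a^m b^{-1}$, which lie in the cyclic data of $a$ and one conjugator.

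\textbf{Relator and verification.} Given such $P$ containing $u = a^{m}$ and $v = b^{m}$, both in $K \cap P'$ after enlarging $m$, I would take the relator
\[ w \coloneqq [u, v]\,\prod_{i=1}^{p} \gamma_i [u,v]^{\pm 1} \gamma_i^{-1}, \]
or more simply $w = [u, \gamma v \gamma^{-1}]$-free $C'(\tfrac16)$ words built from long small-cancellation words $\gamma_i \in \langle x,y\rangle$. The choice must ensure that $w$ is a non-trivial word in which $u, v$ appear only as $u^{\pm1}, v^{\pm 1}$ alternately. Then $w \in K \cap P'$ by normality, so $M = \normal{w}_G \subset N \cap K$. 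By \cref{sc} the quotient $P/W$ is hyperbolic and virtually special, so \cref{MSQT} makes $G/M$ hyperbolic and virtually special, and the tight small-cancellation argument from \cref{indstep} gives torsion-freeness. Finally, I must arrange that the relation expresses a non-trivial word in $a, b$ alone. The simplest way is to take $w$ to be a long $C'(\tfrac16)$ word in $u, v$ only, with no $x, y$ at all, which is possible in the free group $\langle u, v\rangle$. Then $x, y$ serve only to make \cref{random} applicable, and the image of $w$ is a non-trivial reduced word in $a, b$ that is trivial in $G/M$. This kills the pair.
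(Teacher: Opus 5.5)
There is a genuine gap, and it is the step you flag yourself as the main obstacle. Your argument needs a malnormal quasiconvex free subgroup $P$ of $G$ that \emph{contains} $u=a^m$ and $v=b^m$ (or $v=ba^mb^{-1}$) as basis-type elements. No such extension of \cref{random} exists in general, because malnormality and torsion-freeness force too much into $P$. From $a^m\in P\cap aPa^{-1}$ you get $a\in P$, and likewise $b\in P$; for $v=ba^mb^{-1}$, use $v\in P\cap bPb^{-1}$.

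Here is an instance where this fails. Take $G=\langle a,g\mid r\rangle$, a two-generator $C'(\frac{1}{6})$ one-relator group with $r$ long and not a proper power, and set $b=gag^{-1}$. By Greendlinger's lemma one can choose $r$ so that $a,b$ freely generate $F_2$ in $G$. But $a,b\in P$ gives $b\in P\cap gPg^{-1}$, hence $g\in P$ and $P=G$, which is not free. So your relator cannot live in a free peripheral subgroup containing $a$ and $b$, and \cref{MSQT} cannot be applied as you intend. Your earlier attempt with $P=\langle a_0,x,y\rangle$ fails for the reason you noticed: its relators never involve $b$.

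The missing idea is to reverse the containment. You do not need $a,b\in P$, only $P\subset\langle a,b\rangle$, because killing \emph{any} non-trivial element of $H=\langle a,b\rangle\cong F_2$ already destroys freeness of $a,b$.

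\begin{enumerate}
\item Since $H$ is non-elementary, \cref{random} gives $x,y\in H$ with $P=\langle x,y\rangle$ free, malnormal and quasiconvex in $G$. For instance, apply it with $n=2$ and keep the free factor $\langle h_1,h_2\rangle$, which inherits both properties.
\item Take $m$ with $x^m,y^m\in K\cap P'$, and let $w\in\langle x^m,y^m\rangle$ be a $C'(\frac{1}{6})$ word in the basis $\{x,y\}$ that is not a proper power.
\item Then $w\in H\subset N$ and $w\in K$, so $M=\normal{w}_G\subset N\cap K$. There is no issue about roots leaving $N$.
\item $W=\normal{w}_P\subset P'$, and $P/W$ is hyperbolic and virtually special. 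So \cref{MSQT} and the tight small cancellation argument of \cref{indstep} give the second item.
\item $w$ is a non-trivial element of $\langle a,b\rangle$ that dies in $G/M$, so $a,b$ do not freely generate there. By Hopficity, $\langle a,b\rangle_{G/M}\ncong F_2$.
\end{enumerate}

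This is the paper's proof. Apart from the choice of $P$, your final paragraph already has the right shape.
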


\begin{proof}
By induction, it suffices to prove this in the case when $A = \{a, b\}$ consists of two distinct non-trivial elements. Let $H \coloneqq \langle a, b \rangle$; if $H \ncong F_2$ we can take $M = \{ 1 \}$. Otherwise, \cref{random} gives elements $x, y \in \langle a, b \rangle$ such that $P \coloneqq \langle x, y \rangle < H$ is a free group of rank $2$ that is quasiconvex and malnormal in $G$. By \cref{bowditch}, $G$ is hyperbolic relative to $P$. Let $P' < P$ be the finite-index normal subgroup given by \cref{MSQT}.

Let $m \in \N$ be sufficiently large so that $x^m, y^m \in K \cap P'$. Choose $w \in \langle x^m, y^m \rangle$ to be a $C'(\frac{1}{6})$-small cancellation word in $P$ that is not a proper power. Setting $W \coloneqq \normal{w}_P$ and $M \coloneqq \normal{w}_G = \normal{W}_G$, we see that $M \subset N \cap K$, and that $G/M$ is torsion-free, hyperbolic and virtually special, just as in the proof of \cref{indstep}. Moreover, the map $\langle a, b \rangle_G \to G/M$ is not injective, since it has $w$ in its kernel; because free groups are Hopfian, this shows that $\langle a, b \rangle_{G/M} \ncong F_2$.
\end{proof}

The following is an analogue of \cref{indstep} that eliminates virtual maps onto $\Z$.

\begin{lemma}
\label{indstep:FW}
	Let $G$ be a torsion-free, non-elementary hyperbolic, virtually special group. Let $N < G$ be a non-trivial normal subgroup, let $K < G$ be a finite-index normal subgroup, and let $L < N$ be a non-elementary finitely generated subgroup.
	
	Then there exists a normal subgroup $M < G$ such that
	\begin{itemize}
	\item $M \subset N \cap K$;
	\item $G/M$ is torsion-free, hyperbolic and virtually special;
	\item The image of $L$ in $G/M$ has trivial abelianisation.
	\end{itemize}
\end{lemma}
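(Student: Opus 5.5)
We follow the same template as \cref{indstep} and \cref{indstep:NL}: one Dehn filling along a malnormal quasiconvex free subgroup of $L$. Let $\ell_1, \ldots, \ell_k$ be generators of $L$, which we may take non-trivial. The goal is to add relations that make each $\ell_j$, or at least a power of it, a product of commutators of elements of (the image of) $L$.

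The difficulty is that filling $P$ only kills words in $P$, so the relations must live in a free subgroup. We therefore handle one generator at a time, and the final relation has to be a product of commutators. Replace $\ell_j$ by a primitive root $a$. The root may not lie in $L$, so we must be careful: it is cleaner to work with $a = \ell_j$ itself if it is primitive. Otherwise we take the root $r$ and note that $\langle r\rangle \cap L$ contains $\ell_j$; below we only need $a^m\in L$, which holds since $m$ can be chosen to be a multiple of the exponent.

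Since $L$ is non-elementary, \cref{random} applied to $H = L$ gives $x, y \in L$ such that $P = \langle a, x, y\rangle$ is free of rank $3$, malnormal and quasiconvex in $G$. Then $G$ is hyperbolic relative to $P$ by \cref{bowditch}. Let $P'$ be the subgroup given by \cref{MSQT}, and pick $m$ with $a^m, x^m, y^m \in K \cap P'$.

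Now take the relator
\[
w \coloneqq a^{-m} \prod_{i=1}^{p} [\gamma_i x^m \gamma_i^{-1}, \eta_i y^m \eta_i^{-1}],
\]
with $\gamma_i, \eta_i$ long small cancellation words in $x^m, y^m$. Then $w$ is a $C'(\frac16)$ word in $P$, it is not a proper power, and it lies in the normal subgroup $K\cap P'$ of $P$. As before, $M = \normal{w}_G \subset N\cap K$. \cref{sc} and \cref{MSQT} give that $G/M$ is hyperbolic and virtually special, and the tight geometric small cancellation argument from \cref{indstep} gives torsion-freeness.

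In $G/M$ the image of $a^m$ is a product of commutators of elements of the image of $L$. Hence $\ell_j^{\,m'}$, for a suitable $m'$, dies in the abelianisation of the image of $L$, so its image there is torsion. Iterating over $j$ by induction, each time passing to the new group and reapplying the argument to the image of $L$, makes every generator torsion in $L^{ab}$. This requires the image of $L$ to remain non-elementary. That follows because it is a non-trivial normal-in-nothing subgroup; more concretely, the filling is injective on large balls, and $L$ contains a ping-pong pair of bounded length which survives once $p$ and the word lengths are taken large.

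The main obstacle is that this only makes $L^{ab}$ torsion, not trivial. To fix this, replace the exponent $m$ in the first factor by $1$. We need the relator $a^{-1}\prod_i[\cdots]$ to lie in $P'\cap K$, and $a$ alone need not. I would instead use $a^{-(m+1)}\cdot a^{m}$-type balancing: add two relators, one expressing $a^{m}$ and one expressing $a^{m+1}$ as products of commutators, both in $P$. Then $a = a^{m+1}a^{-m}$ lies in the commutator subgroup of the image of $L$. This requires choosing $m$ so that $a^{m+1}$ also lies in $P'\cap K$, which is impossible in general.

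The honest fallback, which is what I expect is needed, is the following. Since $L$ is finitely generated, $L^{ab}$ being finite means the image $\bar L$ has a characteristic finite-index subgroup $\bar L_1 = [\bar L,\bar L]$. We then repeat the procedure on $\bar L_1$, which is finitely generated and non-elementary. Making $\bar L_1$ have finite abelianisation still does not give triviality, so ultimately I would use the relator $a^{-1}\prod[\cdot,\cdot]$ with conjugators chosen in $K\cap P'$. The relator need not lie in $P'$ as an element; what \cref{MSQT} requires is that the normal closure $W$ in $P$ is contained in $P'$. This fails for such $w$, and that is exactly the step I expect to be delicate.
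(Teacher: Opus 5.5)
Up to the sentence ``Iterating over $j$ \dots\ makes every generator torsion in $L^{ab}$'', your argument is the paper's proof. There is a single Dehn filling along $P=\langle a,x,y\rangle$ with $x,y\in L$. The relator lies in $K\cap P'$ and expresses $a^m$ through elements of $L$. This is followed by induction over a finite generating set of $L$. The paper uses $a^{-mq}\prod_{i=1}^p\gamma_i a^m\gamma_i^{-1}$ with $p\neq q$, so that $a$ has order dividing $m|q-p|$ in $\bar L^{ab}$. Your product of commutators does the same job, provided $p$ is taken large. Subwords such as $\gamma_i x^m\gamma_i^{-1}\eta_i$ occur in both $w$ and $w^{-1}$, so each commutator contributes pieces of a fixed proportion of its length. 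Your handling of the root is actually more careful than the paper's ``$P<L$'': you take $m$ divisible by $k$, where $a^k=\ell_j$, so that the relator lies in $L\subset N$.

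The obstacle you then run into is not a technicality: the conclusion ``trivial abelianisation'' is false in general. The paper's own proof, exactly like yours, only shows that the image $\bar L$ has \emph{finite} abelianisation. That is also all that is used later, in point \eqref{newlist:Z} of the proof of \cref{algebraic:NL}. The reason is that $M\subset K$ forces $\bar L$ to surject onto $LK/K$, so $\bar L^{ab}$ surjects onto $(LK/K)^{ab}$. For example, take $G=F_2$, $L=N=\ker(F_2\to\Z/2)$ and $K=[N,N]N^2$. Then $K$ is characteristic in $N$, hence normal of finite index in $G$, and every admissible $\bar L$ maps onto $N/K\cong(\Z/2)^3$. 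So the balancing relators, the passage to $[\bar L,\bar L]$, and the relator $a^{-1}\prod[\cdot,\cdot]$ in your last two paragraphs cannot be made to work. You should simply stop once every generator is torsion in $\bar L^{ab}$, and read the lemma with ``finite'' in place of ``trivial''.

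Finally, tighten the non-elementarity step. ``Normal-in-nothing'' is meaningless, and ``a ping-pong pair survives'' needs an argument. One option is to argue as the paper does: for a deep enough filling, $P/W$ embeds in $G/M$, and by the Freiheitssatz $\langle x,y\rangle$ stays free there, since your $w$ involves $a$. Another option is to use injectivity of $G\to G/M$ on a finite set, from the last part of Osin's Dehn filling theorem after shrinking $P'$. This keeps $[u,v]\neq 1$ for some $u,v\in L$, and a non-abelian subgroup of a torsion-free hyperbolic group is non-elementary.
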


\begin{proof}
Because $L$ is finitely generated, by induction it suffices to show that, for an element $a \in L$, we can find a normal subgroup $M < G$ satisfying the first two items in the statement, such that $a$ has finite order in the abelianisation of $\bar{L}$, the image of $L$ in $G/M$. To apply induction, we need to moreover ensure that $\bar{L}$ is a non-elementary subgroup of $G/M$.

Up to replacing $a$ by a root, we may assume that $a$ is primitive. \cref{random} gives elements $x, y \in L$ such that $P \coloneqq \langle a, x, y \rangle < L$ is a free group of rank $3$ that is quasiconvex and malnormal in $G$. By \cref{bowditch}, $G$ is hyperbolic relative to $P$. Let $P' < P$ be the finite-index normal subgroup given by \cref{MSQT}.

Let $m \in \N$ be sufficiently large so that $a^m \in K \cap P'$. We proceed as in the proof of \cref{indstep}, adding a small cancellation relation of the form
\[w \coloneqq a^{-mq} \prod\limits_{i = 1}^p \gamma_i a^m \gamma_i^{-1},\]
where we moreover choose $p \neq q$. This way, $a$ has order dividing $m|q-p| > 0$ in the abeliansation of $\bar{L}$. Finally, $\bar{L}$ is still non-elementary, since it contains the non-elementary subgroup $P/\normal{w}_P$. The rest follows as in \cref{indstep}.
\end{proof}

\subsection{Proof of \cref{algebraic:NL}}

We start as in the proof of \cref{norms}, choosing $\nu$ to be the commutator length $\cl$. Now the $\Gamma$ given by \cref{w:finite} is a perfect group, and $Q$ is still Higman's group (\cref{higman}). \cref{macarena} gives a short exact sequence
\[1 \to N_0 \to G_0 \to Q \to 1,\]
where $G_0$ is hyperbolic, torsion-free, virtually special, and $N_0$ is perfect.

We construct by induction short exact sequences
\[\begin{tikzcd}
	& \vdots & \vdots & \vdots \\
	1 & {N_i} & {G_i} & Q & 1 \\
	1 & {N_{i+1}} & {G_{i+1}} & Q & 1 \\
	& \vdots & \vdots & \vdots \\
	1 & {N_\infty} & {G_\infty} & Q & 1
	\arrow[two heads, from=1-2, to=2-2]
	\arrow[two heads, from=1-3, to=2-3]
	\arrow[no head, from=1-4, to=2-4]
	\arrow[shift left, no head, from=1-4, to=2-4]
	\arrow[from=2-1, to=2-2]
	\arrow[from=2-2, to=2-3]
	\arrow[two heads, from=2-2, to=3-2]
	\arrow[from=2-3, to=2-4]
	\arrow["{\pi_i}", two heads, from=2-3, to=3-3]
	\arrow[from=2-4, to=2-5]
	\arrow[shift left, no head, from=2-4, to=3-4]
	\arrow[no head, from=2-4, to=3-4]
	\arrow[from=3-1, to=3-2]
	\arrow[from=3-2, to=3-3]
	\arrow[two heads, from=3-2, to=4-2]
	\arrow[from=3-3, to=3-4]
	\arrow[two heads, from=3-3, to=4-3]
	\arrow[from=3-4, to=3-5]
	\arrow[shift left, no head, from=3-4, to=4-4]
	\arrow[no head, from=3-4, to=4-4]
	\arrow[two heads, from=4-2, to=5-2]
	\arrow[two heads, from=4-3, to=5-3]
	\arrow[shift left, no head, from=4-4, to=5-4]
	\arrow[no head, from=4-4, to=5-4]
	\arrow[from=5-1, to=5-2]
	\arrow[from=5-2, to=5-3]
	\arrow[from=5-3, to=5-4]
	\arrow[from=5-4, to=5-5]
\end{tikzcd}\]
and finite-index normal subgroups $K_i < G_i$ with the following properties.
\begin{enumerate}
\item\label{newlist:hvs} Each $G_i$ is torsion-free, hyperbolic and virtually special (hence residually finite);
\item\label{newlist:scl} For every element $a \in N_i$ of length at most $i$, it holds $\scl_{N_i}(a) < \frac{1}{i}$;
\item\label{newlist:free} No pair of elements $a, b \in N_i$ of length at most $i$ freely generates a free group;
\item\label{newlist:Z} If $L_i$ denotes the intersection of all index-$i$ subgroups of $N_i$, then $L_i$ has finite abelianisation;
\item\label{newlist:K} $\ker(\pi_{i-1}) < N_{i-1} \cap K_{i-1}$ and $K_i < \pi_{i-1}(K_{i-1})$;
\item\label{newlist:rf} The ball of radius $i$ in $G_i$ maps injectively into $G_i/K_i$.
\end{enumerate}
This is achieved as in the proof of \cref{norms}, but in three steps: first we apply \cref{indstep} to ensure \eqref{newlist:scl}; then we apply \cref{indstep:NL} to ensure \eqref{newlist:free}; and finally we apply \cref{indstep:FW}\footnote{We can apply \cref{indstep:FW}, because the intersection of all subgroups of a given index in a finitely generated group has finite index, hence is finitely generated.} to ensure \eqref{newlist:Z}. Note that each of these three properties passes to quotients, so in the last step all three points are satisfied. The remaining points follow as in the proof of \cref{norms}.

We claim that $N_\infty$ and $G_\infty$ have the properties in the statement of \cref{algebraic:NL}. Clearly $G_\infty$ surjects onto $Q$, and $N_\infty$ is perfect. The vanishing of $\scl_{N_\infty}$, finite generation, torsion-freeness, residual finiteness, and the profinite isomorphism, all follow exactly as in the proof of \cref{norms}.

To see that $N_\infty$ has no non-abelian free subgroups, it suffices to show that no pair of elements $a, b \in N_\infty$ freely generates a free group. Suppose that $a$ and $b$ have length at most $i$, then they admit lifts to $N_i$, which we also denote by $a, b$, which again have length at most $i$. By point \eqref{newlist:free}, $\langle a, b \rangle_{N_i} \ncong F_2$; because free groups are Hopfian, $\langle a, b \rangle_{N_\infty} \ncong F_2$.

Finally, suppose by contradiction that there exists a finite-index subgroup $L < N_\infty$ that maps onto $\Z$. Up to passing to a deeper finite-index subgroup, we may assume that there exists $i$ such that $L$ is the intersection of all index-$i$ subgroups of $N_\infty$. Therefore $L_i$ is a subgroup of the preimage of $L$ in $N_i$. Since $L_i$ has finite index, the surjection $L \to \Z$ pulls back to a non-zero map $L_i \to \Z$, contradicting \eqref{newlist:Z}. \qed

\section{Concluding remarks}
\label{s:remarks}

On the flexibility of our construction:

\begin{remark}
\label{flexibility}

Like many others involving iterated small cancellation or iterated group-theoretic Dehn filling, our construction is quite flexible. Here are some other properties that can be imposed.

\begin{itemize}
\item Using the last part of \cite[Theorem 1.1]{osin} in the inductive step, we can ensure that in the construction of $G_\infty$ the quotients $G_i \to G_{i+1}$ are injective on a given finite set. Taking this into account in the proof, ensuring that at each step the injectivity radius is large compared to the hyperbolicity constant, makes the limit group $G_\infty$ lacunary hyperbolic \cite[Theorem 1.1]{lacunary}.

\item Adapting \cref{w:finite} appropriately, we can pick as a starting point a hyperbolic virtually special group $\Gamma$ of larger cohomological dimension. Using the full power of \cite[Theorem 1.1]{macarena}, we can then ensure that $G_0$ has arbitrarily large cohomological dimension. \cite[Corollary 2.2]{PS} implies that the cohomological dimension of each $G_i$ remains high and bounded, and hence so does the cohomological dimension of $G_\infty$, as in \cite{ffsun}.

\item As we saw in the proof of torsion-freeness in \cref{indstep}, we can ensure that the quotients $G_i \to G_{i+1}$ satisfy an arbitrarily good geometric small cancellation condition. This allows to apply \cite[Theorem H]{tarski} at each step, just as in the construction of torsion-free Tarski monsters with controlled first-order theory \cite[Theorem 7.10]{tarski}. This way, $G_\infty$ may be chosen so that its two-quantifier first-order theory satisfies $\mathrm{Th}_{\forall \exists}(G_\infty) = \mathrm{Th}_{\forall \exists}(G_\infty * \Z)$, and hence its positive first-order theory $\mathrm{Th}^+(G_\infty)$ coincides with that of a non-abelian free group, by \cite[Theorem F]{montse}\footnote{See \cite[Corollary 7.15]{tarski} for a more detailed argument}.
\end{itemize}
\end{remark}

On a limitation of the method:

\begin{remark}
To ensure residual finiteness of the direct limit $G_\infty$, we used in a crucial way that the Dehn filling in the inductive step can be arranged inside a given finite-index subgroup. This is also necessary in the proof of e.g. \cref{indstep} itself, in order to apply \cref{MSQT}. For our purposes this was not an issue, as it was sufficient to work with a suitable power.

This however becomes an issue if one tries to arrange $N_\infty$ to be \emph{uniformly perfect}, that is its commutator length is bounded. This is much stronger than the vanishing of $\scl$, and is a condition that needs to be verified element-wise: passing to a suitable power is not sufficient. In fact, the last item of \cref{flexibility} seems to suggest that $N_\infty$ would have trivial positive theory in a construction of this type, which would imply in particular that it is not uniformly perfect.

As such, we do not know whether uniform perfectness is a profinite invariant\footnote{Note that all finitely generated profinite groups are uniformly perfect \cite[Theorem 1.6]{nikolov:segal}.}. A related problem is whether bounded generation is a profinite invariant. Recent breakthroughs in the world of linear groups \cite{bddgen, conjwidth} suggest that it might be more promising to look among high rank lattices.
\end{remark}

On actions on CAT(0) cube complexes:

\begin{remark}
\label{FW}

We proved that property FW$_\infty$ is not a profinite invariant. We do not know if \emph{property FW}, the fixed point property for actions on (possibly infinite-dimensional) CAT(0) cube complexes \cite{cornulier}, is a profinite invariant. However we think this construction does not help. The groups $G_i$ act properly and cocompactly on CAT(0) cube complexes, so in fact it should even be possible to ensure that $G_\infty$, and thus $N_\infty$, acts properly on a CAT(0) cube complex.

In another direction, we do not know whether proper actions on CAT(0) cube complexes are a profinite invariant. Note that cocompact cubulation is \emph{not} a profinite invariant, for instance in the classical Rips construction \cite{rips}, the overgroup is cocompactly cubulated (\cref{sc}), while the normal subgroup, being infinitely presented, is not.
\end{remark}

On the results of \cite{profinite:hb}:

\begin{remark}
\label{h2b}
If $G$ is a group, and $Q(G)$ denotes the space of homogeneous quasimorphisms, then there is a short exact sequence:
\[0 \to H^1(G) \to Q(G) \to H^2_b(G) \to H^2(G),\]
where $H^2_b(G)$ denotes the second bounded cohomology of $G$ \cite[Proposition 2.8]{frigerio}.
Echtler--Kammeyer \cite{profinite:hb} proved that there exist pairs of high rank lattices $\Lambda_0, \Lambda_1$ with isomorphic profinite completions such $H^2_b(\Lambda_i)$ is $i$-dimensional, thus proving that the vanishing of second bounded cohomology is not a profinite invariant. By work of Burger--Monod and Iozzi \cite{burger:monod1, burger:monod2, iozzi:appendix}, such lattices have no unbounded quasimorphisms, hence their construction does not recover our main theorem.

Conversely, our construction does not recover their result. On the one hand, since $G_\infty$ is finitely generated and has an infinite-dimensional space of homogeneous quasimorphisms, we see that it has infinite-dimensional second bounded cohomology. On the other hand, the vanishing of $\scl_{N_\infty}$ only tells us that the second bounded cohomology of $N_\infty$ embeds into its second cohomology. However, the second cohomology of groups constructed via iterated group-theoretic Dehn filling tends to be infinite-dimensional \cite[Remark 3.5]{ffsun}, and we do not know how to identify which classes are bounded.

It remains open whether having infinite-dimensional second bounded cohomology is a profinite invariant.
\end{remark}

On free subgroups:

\begin{remark}
\label{freesub}

In \cite{profinite:amenable} it is shown that there exists a Grothendieck pair where the subgroup is amenable but the overgroup contains a non-abelian free subgroup. This shows that both amenability and the existence of a non-abelian free subgroup are not profinite invariants. Our main theorem recovers the latter fact: indeed $G_\infty$ has a non-abelian free subgroup, via ping-pong. However it does not recover the result on amenability. In fact, the construction can be arranged so that $N_\infty$ is not amenable\footnote{The existence of non-amenable groups without free subgroup was the subject of von Neumann's conjecture (stated by Day \cite{day}) at the origin of the subject; by now we have several constructions \cite{vN, monod}.}.

Indeed, recall that the construction starts with a short exact sequence
\[1 \to N_0 \to G_0 \to Q \to 1,\]
where $G_0$ is torsion-free hyperbolic and $N_0$ is non-trivial and normal. By small cancellation theory on hyperbolic groups \cite{residualing}\footnote{See \cite[Theorem 2.4]{osin:RH} for a precise statement, although in the more general context of relatively hyperbolic groups.}, since $N_0$ is a non-elementary subgroup of $G_0$, there exists a torsion-free hyperbolic quotient $G_0 \twoheadrightarrow H$ such that the restriction $N_0 \to H$ is surjective. Taking a free product of $H$ with a torsion-free hyperbolic group with property (T), and applying small cancellation again, we obtain a quotient $H \twoheadrightarrow L$ that is torsion-free, hyperbolic, and has property (T). There exists an integer $n \in \N$ such that the $n$-Burnside quotient $B$ of $L$ is infinite \cite{burnside1, burnside2}.

Each quotient $G_i \twoheadrightarrow G_{i+1}$ in \cref{algebraic:NL} involves an application of Lemmas \ref{indstep}, \ref{indstep:NL} and \ref{indstep:FW}, all of which only add relations that are products of $m_i$-th powers of elements in $N_i$. Here $m_i$ is chosen so that every $m_i$-th power in $N_i$ belongs to a specific finite-index subgroup of $N_i$, so we may ensure that $m_i$ is a multiple of $n$. It follows that $N_\infty$ surjects onto the $n$-Burnside quotient of $N_0$, hence also onto $B$, since $L$ is a quotient of $N_0$ and $B$ is the $n$-Burnside quotient of $L$. Finally, $B$ is an infinite group with property (T), hence it is non-amenable. We conclude that $N_\infty$, by virtue of having a non-amenable quotient, is not amenable.
\end{remark}

\footnotesize

\bibliographystyle{amsalpha}
\bibliography{ref}

\vspace{0.5cm}

\normalsize

\noindent{\textsc{Department of Pure Mathematics and Mathematical Statistics, University of Cambridge, UK}}

\noindent{\textit{E-mail address:} \texttt{ff373@cam.ac.uk}}

\end{document}